\newtheorem{theorem}{Theorem}[section]
\newtheorem{lemma}[theorem]{Lemma}
\newtheorem{remark}{Remark}
\newtheorem{definition}{Definition}[section]
\newtheorem{proposition}[theorem]{Proposition}
\newtheorem{assumption}{Assumption}
\newcommand{\norm}[1]{\| #1 \|}
\newdimen\@widthOfTo%
\newdimen\@widthOfLand%
\newdimen\@widthOfImplies%
\pgfmathsetmacro{\@scaleFactorImplies}{\@widthOfTo/\@widthOfImplies}%
\pgfmathsetmacro{\@scaleFactorTo}{\@widthOfLand/\@widthOfTo}%
\newcommand*{\ScaledImplies}{\mathrel{\raisebox{0.3ex}{\scalebox{\@scaleFactorImplies}{\ensuremath{\Longrightarrow}}}}}%
\newcommand*{\ScaledTo}{\mathbin{\raisebox{0.3ex}{\scalebox{\@scaleFactorTo}{\ensuremath{\to}}}}}%
\title{\LARGE \bf
On the Geometry and Linear Convergence of Primal-Dual Dynamics
}
\author{P. Bansode$^{1}$, V. Chinde$^{2}$, S. R. Wagh$^{3}$, R. Pasumarthy$^{4}$, and N. M. Singh$^{3}$% <-this % stops a space
\thanks{*This work was not supported by any organization}% <-this % stops a space
\thanks{$^{1}$P. Bansode is with Customized Energy Solutions, Pune, India.
        {\tt\small pbansode@ces-ltd.com}}%
\thanks{$^{2}$V. Chinde is with National Renewable Energy Laboratory, Colorado, USA.
}
\thanks{$^{3}$S. R. Wagh and N. M. Singh are with Electrical Engineering Department, Veermata Jijabai Technological Institute, Mumbai, India.
}
\thanks{$^{4}$R. Pasumarthy is with Electrical Engineering Department, Indian Institute of Technology Madras, India
}}
\begin{document}

\maketitle
\thispagestyle{empty}
\pagestyle{empty}

%%%%%%%%%%%%%%%%%%%%%%%%%%%%%%%%%%%%%%%%%%%%%%%%%%%%%%%%%%%%%%%%%%%%%%%%%%%%%%%%
\begin{abstract}
The paper proposes a variational-inequality based primal-dual dynamic that has a globally exponentially stable saddle-point solution when applied to solve linear inequality constrained optimization problems.
A Riemannian geometric framework is proposed wherein we begin by framing the proposed dynamics in a fiber-bundle setting endowed with a Riemannian metric that captures the geometry of the gradient (of the Lagrangian function). A strongly monotone gradient vector field is obtained by  using the natural gradient adaptation on the Riemannian manifold. The Lyapunov stability analysis proves that this adaption leads to a globally exponentially stable saddle-point solution. Further, with numeric simulations we show that the scaling a key parameter in the Riemannian metric results in an accelerated convergence to the saddle-point solution.
\end{abstract}

%%%%%%%%%linear%%%%%%%%%%%%%%%%%%%%%%%%%%%%%%%%%%%%%%%%%%%%%%%%%%%%%%%%%%%%%%%%%%%%%%%
\section{INTRODUCTION}
\par Saddle-node dynamics has remained a subject of substantial research for many years \cite{arrow1958studies,brezzi1974existence,benzi2005numerical,nedic2009subgradient}. It iteratively seeks a solution to saddle-point problems that arise in a number of disciplines including equilibrium theory, game theory and optimization. However its application to constrained optimization problems has gained wide interest over the recent years, especially in the areas of the power networks \cite{zhao2014design,mallada2017optimal,yi2015distributed, nguyen2018contraction} and wireless networks  \cite{feijer2010stability,chen2012convergence,ferragut2014network}), and building automation systems \cite{kosaraju2018stability}, etc. Within this domain, it is popularly regarded as primal-dual dynamics. 
% \par Constrained optimization problem can be formulated as a  saddle-point problem by using the Lagrangian methods (the method of multipliers) which have been around since they first appeared in \cite{arrow1958studies}. It involves minimizing (or maximizing) the Lagrangian function instead of the objective function and allows iterating simultaneously on both primal (the decision variable) and dual (the Lagrange multiplier) using the primal-dual gradient flow algorithms. The advantage of this method is that it allows the constrained optimization problem to be solved as an unconstrained optimization problem of both primal and dual variables. 

Over the last decade, various formulations of the primal-dual dynamics have been explored in connection with the constrained optimization problems. These algorithms were studied with primary focus on convergence of iterates to the saddle-point solution and its stability.
These algorithms either use a framework of hybrid dynamical systems \cite{feijer2010stability,cherukuri2016asymptotic} or an augmented Lagrangian technique that involves projections in the Lagrangian function \cite{dhingra2018proximal, qu2019exponential, ding2019global}. The hybrid dynamical systems approach involves switching in the dual dynamics to handle constraint violations. 
But the inherent discontinuities present in the dual dynamics make it difficult to prove exponential stability of the saddle-point solution. Thus so far, only globally asymptotic stability of the saddle-point solution has been proven.
The augmented Lagrangian approach is based on augmentation of the penalty terms in the Lagrangian function \cite{bertsekas2014constrained}. The penalty term is designed in such a way that the resulting dual dynamics is discontinuity-free, meaning that the rate of change of the dual variable does not involve switching terms. The basis of these penalty functions is either a projection or a proximal operator. So far, the latter approach has been successful in proving the global exponential stability of the algorithm when applied to solve linearly constrained optimization problems. In \cite{tang2019semi}, it is shown that the algorithm attains a semi-globally exponential convergence for a more general convex inequality constrained optimization problem.  

The algorithm that we propose in this paper uses a variational inequality based projected dynamical systems framework and produces a globally exponentially stable saddle-point solution for a linear inequality constrained optimization problem. The projected dynamical systems have been widely used to solve variational inequalities \cite{friesz1994day,nagurney2012projected,rockafellar2009variational}. We model the saddle-point problem as a variational inequality and then use the projected dynamical system that singularly handles the constraints without involving discontinuities in the dual dynamics (the detailed approach of the proposed dynamics is documented in our online report \cite{bansode2019exponential}, which is omitted from this paper due to avoid repetition). 

In contrast to the existing research on primal-dual dynamics, our algorithm does not depend on the framework of hybrid dynamical systems or the augmented Lagrangian techniques. We consider differential equations for solution trajectories of variational inequality proposed in \cite[Section 5.7.1]{friesz2010dynamic} as a basis for our algorithm, and equate the time derivative of primal-dual variables to the terms for which the corresponding fixed point problem of the variational inequality fails to be satisfied (proved in our online report \cite[Section 2.2]{bansode2019exponential}). Instead of tuning free-parameters in the augmented Lagrangian function we start by studying the geometry of gradient vector field defining the primal-dual dynamics. This enables us to carry out the required alterations to the dynamics in the Euclidean space so as to ensure linear convergence rates under suitable assumptions. We first analyze the vector field of the proposed algorithm in the Euclidean space and prove that the Euclidean geometry is not suitable for achieving the same. Then we lay down a procedure to construct a suitable differential geometry for the cause. The geometrization of the problem is based in a fiber bundle with a semi-Riemannian metric imposed by the structure of the primal-dual dynamics. A suitable alteration of the Geometry is achieved by a scaling of the connection leading to a natural gradient in the Riemannian space. As the natural gradient behaves like the Euclidean gradient in the sense that it achieves the steepest descent direction for the gradient descent algorithm \cite{amari1998natural,amari1998natural1}, the obtained natural gradient dynamics is substituted for the original Primal Dual dynamics in the Euclidean space to ensure a faster convergence rate (in our case, the contraction region is obtained in the Euclidean domain because we are using the natural gradient as opposed to the contraction region being defined by a Riemann metric). Adapting this into the proposed algorithm results in a strongly monotone gradient vector field with steepest descent and ascent directions along the primal and dual variables, respectively, then we construct a Lyapunov function which shows that for a strongly monotone gradient the proposed algorithm has an exponentially stable saddle-point solution.
Further with the help of numeric simulations, it is shown that by appropriate scaling of a key parameter in the natural gradient which in turn leads to the scaling of the connection as alluded to above, the convergence rate of the proposed algorithm can be accelerated.
\subsection*{Notations}\label{prel}
\par The set $\mathbb{R}$ (respectively $\mathbb{R}_{\geq 0}$ or $\mathbb{R}_{> 0}$) is the set of real (respectively non-negative or positive) numbers. If $f: \mathbb{R}^n \rightarrow \mathbb{R}$ is continuously differentiable in $x \in \mathbb{R}^n$, then $\nabla_x f:\mathbb{R}^n \rightarrow \mathbb{R}^n$ is the gradient of $f$ with respect to $x$. $\norm{.}$ denotes the Euclidean norm. 
% For scalars $x,y$, $[x]^+_y:=x$ if $y>0$ or $x>0$, and $[x]^+_y:=0$ otherwise. For a set $X \subseteq \mathbb{R}^n$, the notation $\mathrm{relint}X$ defines the relative interior of $X$. A vector of all zeros is denoted by $\mathbf{0}$.

\section{Problem Formulation} \label{proof} Consider the following constrained optimization problem
\begin{equation}
\setlength\arraycolsep{1.5pt}
\begin{array}{cc}
\mathrm{minimize} & f(x)\\
\mathrm{subject~to} & x \in X
\end{array} \label{cvx}
\end{equation}
where 
\begin{equation}
X =\{x\in \mathbb{R}^n|g_i(x)\leq 0,\forall^m_{i=1}\},\label{constr_set} 
\end{equation} is the domain of the problem \eqref{cvx}.
The functions $f:\mathbb{R}^n\rightarrow\mathbb{R}$, $g:\mathbb{R}^n \rightarrow \mathbb{R}^m$ are assumed to be continuously differentiable $(\mathcal{C}^2)$ with respect to $x$, with the following assumptions:
\begin{assumption}\label{ass1}
    $\nabla f: \mathbb{R}^n\rightarrow \mathbb{R}^n$ is strongly monotone on $X$, with $\mu>0$  such that the following holds:
    \begin{equation*}
    (x_1-x_2)^T(\nabla f(x_1)-\nabla f(x_2)) \geq \mu \norm{x_1-x_2}^2.
    \end{equation*}  
\end{assumption} 
As a consequence of Assumptions \ref{ass1}, it is derived that the objective function $f$ is strongly convex in $x$ with the modulus of convexity given by $\frac{\mu}{2}$.

\begin{assumption}\label{ass2}
    The constraint function $g(x)=Ax-b$ is linear in $x$.
\end{assumption} 
\begin{assumption}\label{ass3}
    There exists an $x \in \mathrm{relint}X$ such that $g_i(x)<0, \forall^{m}_{i=1}$.
\end{assumption}
\begin{assumption}\label{as4}
Let matrix $\frac{\partial g}{\partial x}$ have full row rank $m\leq n$ and $q_1I\leq \frac{\partial g}{\partial x}\frac{\partial g}{\partial x}^T \leq q_2I$, where $I$ is an identity matrix and $q_1,q_2$ are positive constants.
\end{assumption}
Assumptions \eqref{ass1}-\eqref{ass3} ensure that $x$ is strictly feasible and strong duality holds for the optimization problem \eqref{cvx}.

Let $L:\mathbb{R}^n \times \mathbb{R}^m \rightarrow \mathbb{R}$ define the \textit{Lagrangian} function of the optimization problem \eqref{cvx} as given below
\begin{equation}
L(x,\lambda)=f(x)+\lambda^Tg(x). \label{lg}
\end{equation}
Let $\lambda_i$ be the Lagrange multipliers associated with $g_i(x)$, then $\lambda\in \Lambda \subseteq \mathbb{R}^m_+=\{\lambda \in \mathbb{R}^m,\lambda_i \geq 0, \forall^m_{i = 1}\}$ defines the corresponding vectors of Lagrange multipliers.

The Lagrangian function $L$ defined in \eqref{lg} is $\mathbb{C}^2$-differentiable convex-concave in $x$ and $\lambda$ respectively, i.e., $L(.,\lambda)$ is convex for all $\lambda\in \Lambda$ and $L(x,.)$ is concave for all $x \in X$. 
% In particular, for a continuously differentiable $L \in \mathbb{C}^1$, we have the following first order properties.
% \begin{align}
%     L(x^*,\lambda) \geq L(x,\lambda)+\frac{\partial L(x,\lambda)}{\partial x}(x^*-x)\\
%     L(x,\lambda^*) \leq L(x,\lambda)+\frac{\partial L(x,\lambda)}{\partial x}(\lambda^*-\lambda)
% \end{align}
% and for $L\in \mathbb{C}^2$, we have the second order property, 
% $    \frac{\partial^2 L}{\partial^2 x}\geq 0,
%     \frac{\partial^2 L}{\partial^2 \lambda}\leq 0$
We say that $(x^*,\lambda^*)$ is a saddle-point if the following holds:
\begin{align}
L(x^*,\lambda)\leq L(x^*,\lambda^*) \leq L(x,\lambda^*) \label{spp}
\end{align} for all $x\in X$ and $\lambda \in \Lambda$.

If $x^*$ is the unique minimizer of $L$, then it must satisfy the Karush-Kuhn-Tucker (KKT) conditions stated as follows.
\begin{align}
g_i(x^*)&\leq 0,~\forall^{m}_{i=1} \label{k1}\\
\lambda^*_i&\geq 0,~\forall^{m}_{i=1} \label{k2}\\
\lambda^*_i g_i(x^*)&=0,~\forall^{m}_{i=1} \label{k3}\\
\nabla f(x^*)+\lambda^{*T}\nabla g(x^*)&=0. \label{k5}
\end{align}

Let us define $z = (x,\lambda) \in \Omega = X \times \Lambda$, where $\Omega$ by definition is a nonempty and closed convex subset of $\mathbb{R}^n$ and $\mathbb{R}^m_{\geq 0}$. Then $z^*=(x^*,\lambda^*)$ is the saddle point solution of \eqref{lg}.

\begin{remark}
    Since strong duality holds, the KKT conditions \eqref{k1}-\eqref{k5} are necessary and sufficient to guarantee optimality of the problem \eqref{cvx}, with $x^*$ as the unique minimizer of \eqref{cvx} and $z^*$ as the unique saddle-point of \eqref{lg}.
\end{remark} 
Let $G:\mathbb{R}^n \times \mathbb{R}^m \rightarrow\mathbb{R}^{n+m}$ define the gradient map of \eqref{lg} as given below:
\begin{align}
G(z)=\nabla_z L=\begin{bmatrix}
-\nabla_x L(x,\lambda)\\
\nabla_\lambda L(x,\lambda)
\end{bmatrix} \label{gu}
\end{align}
Taking gradient descent and gradient ascent along the direction of $x$ and $\lambda$ variables, respectively, we propose an algorithm which in principal uses the framework of projected dynamical systems (for details, please refer to our online report \cite[Section 2]{bansode2019exponential} or Definition \ref{vi_Def}, Proposition \ref{fpppro}, and equation \eqref{frz} in the Appendix section). We designate the algorithm as projected primal-dual dynamics, it is as shown below:
\begin{equation}
\dot{z}=\beta\{P_{\Omega}[z-\alpha G(z)]-z\},\label{mpdd}
\end{equation}
where $\alpha,\beta>0$ are parameters adjusted to control stability and assure convergence, we set $\alpha=\beta=1$ to avoid confusion. $P_{\Omega}$ is a minimum norm projection operator of the form $P_{\Omega}=\arg \min_{v\in \Omega}\norm{z-v}$. Note that, there is no projection taken w.r.t. the primal variable $x$ as it belongs to $\mathbb{R}^n$ but only w.r.t. the dual variable $\lambda$ which is restricted to $\mathbb{R}^m_{\geq 0}$. The following property always holds for projection on $\Omega$,
\begin{align}
    [u-P_\Omega(u)]^T[P_\Omega(z)-z]\geq 0, \forall u\in\mathbb{R}^{n}\times \mathbb{R}^{m}_{\geq 0}, \forall z\in \Omega \label{proja}
\end{align} In what follows, we assess the stability of the proposed algorithm.

\subsection{Stability analysis}
Before proceeding to the stability analysis of the proposed dynamics \eqref{mpdd}, it is worth noting the Definition \ref{def1.1}-\ref{def1.3} and Proposition \ref{monoto} on the monotonicity property, stated in the Appendix of this paper.
\begin{lemma}\label{thm1}
    If Assumptions \eqref{ass1}-\eqref{ass3} hold, then $G(z)$ is monotone such that $[G(z_1)-G(z_2)]^T(z_1-z_2)\geq 0$
    for every pair of $z_1,z_2\in \Omega$.
\end{lemma}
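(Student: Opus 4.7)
The plan is to exploit two structural facts: the bilinear (indeed affine) coupling between primal and dual variables that follows from the linear constraint $g(x)=Ax-b$ in Assumption \ref{ass2}, and the strong monotonicity of $\nabla f$ in Assumption \ref{ass1}. The heart of the argument is that the off-diagonal pieces of $G(z_1)-G(z_2)$ that link $x$ to $\lambda$ will enter the inner product with $z_1-z_2$ with opposite signs and therefore cancel, leaving a diagonal primal contribution that is already controlled by Assumption \ref{ass1}.

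Concretely, I would first substitute $g(x)=Ax-b$ into \eqref{lg} to obtain the closed-form components $\nabla_x L(x,\lambda)=\nabla f(x)+A^T\lambda$ and $\nabla_\lambda L(x,\lambda)=Ax-b$. Writing $G(z)$ block-wise from \eqref{gu} and computing $[G(z_1)-G(z_2)]^T(z_1-z_2)$ produces three terms: one diagonal primal term of the form $[\nabla f(x_1)-\nabla f(x_2)]^T(x_1-x_2)$, and two cross terms of the form $\pm(\lambda_1-\lambda_2)^T A(x_1-x_2)$ coming respectively from the $A^T\lambda$ piece inside $\nabla_x L$ and from $\nabla_\lambda L$ itself. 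These two cross terms are negatives of each other; this is the familiar skew-symmetry of the saddle-point operator associated with a convex-concave bilinear coupling, and it causes them to cancel exactly.

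What is left is $[\nabla f(x_1)-\nabla f(x_2)]^T(x_1-x_2)$, which by Assumption \ref{ass1} is bounded below by $\mu\|x_1-x_2\|^2\ge 0$. This yields the claim and in fact proves a slightly stronger statement than asserted, namely monotonicity with the primal margin $\mu\|x_1-x_2\|^2$; only nonnegativity is needed here, so the remaining slack is reserved for later use (presumably in the exponential-stability argument). I do not expect to invoke Assumption \ref{ass3} or Assumption \ref{as4} in this lemma: Assumption \ref{ass3} is a Slater-type condition whose role is to underwrite strong duality elsewhere, while Assumption \ref{as4}'s full-row-rank hypothesis on $A$ is what will later supply coercivity in the dual direction, neither of which is required for the pure monotonicity conclusion.

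There is no serious obstacle. The one item that deserves careful attention is sign bookkeeping in the definition \eqref{gu} of $G$: the saddle-point convention (descent in $x$, ascent in $\lambda$) must be applied consistently so that the two cross terms truly cancel rather than double up. Once signs are tracked carefully, the proof is essentially a one-line computation followed by a direct appeal to Assumption \ref{ass1}.
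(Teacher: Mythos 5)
Your proof is correct, but it follows a genuinely different route from the paper. The paper does not expand $[G(z_1)-G(z_2)]^T(z_1-z_2)$ at all: it computes the Jacobian $\nabla G$, observes that its symmetric part $\frac{1}{2}(\nabla G+\nabla G^T)$ is block-diagonal with $\nabla^2 f(x)+\lambda^T\nabla^2 g(x)$ in the primal block and zeros elsewhere (the skew cross-blocks $\pm\nabla g$ cancel under symmetrization), and then invokes the Karamardian-type criterion (Proposition \ref{monoto}) that a positive semidefinite Jacobian implies monotonicity. Your cancellation of the two cross terms $\pm(\lambda_1-\lambda_2)^TA(x_1-x_2)$ is exactly the integrated, finite-difference counterpart of the paper's symmetrization step, so the underlying mechanism is the same skew-symmetry of the saddle operator, but your version is more elementary: it avoids the differentiability-based criterion entirely and yields the stronger quantitative conclusion $[G(z_1)-G(z_2)]^T(z_1-z_2)\geq\mu\norm{x_1-x_2}^2$, a primal margin the paper's proof discards (and which, as you note, is the resource needed later since the lemma alone cannot give strong monotonicity in $z$, the dual direction having no margin). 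The trade-off is that your argument leans on Assumption \ref{ass2} (linearity of $g$), whereas the paper's Jacobian computation keeps the $\lambda^T\nabla^2 g(x)$ term and so remains valid for any convex $\mathcal{C}^2$ constraint with $\lambda\in\Lambda$; since Assumption \ref{ass2} is among the lemma's hypotheses, this costs you nothing here. Your caution about sign bookkeeping is also well placed, and in fact more than a formality: the definition \eqref{gu} as printed puts the minus sign on $\nabla_x L$, under which convention the bilinear form would come out $\leq 0$; the Jacobian the paper actually uses in its proof (and the later \eqref{gut}) corresponds to $G=(\nabla_x L,-\nabla_\lambda L)$, which is the convention your computation correctly adopts.
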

\begin{proof}
    First we derive the Jacobian matrix of $G$ as
    \begin{equation}     
    \nabla G = \begin{bmatrix}
    \nabla^2 f(x) + \lambda^T\nabla^2 g(x) & \nabla g(x)^T\\
    -\nabla g(x)    & \mathbf{0}
    \end{bmatrix}
    \end{equation}
    It is know that $G$ is monotone if and only if the $\nabla G$ is positive semidefinite (see, \cite{karamardian1990seven}), which implies that $\nabla G$ must be positive semidefinite $\frac{1}{2}\nabla G+\frac{1}{2} \nabla G^T \geq 0,\forall z\in \Omega,\forall t$. We verify this property by evaluating the symmetric part of $\nabla G$:
    \begin{align}
    \frac{\nabla G + \nabla G^T}{2} &= \begin{bmatrix}
    \nabla^2 f(x) + \lambda^T\nabla^2 g(x) & \mathbf{0}\\
    \mathbf{0}    & \mathbf{0} 
    \end{bmatrix}\\
    &\geq {0},\forall z\in \Omega,\forall t\label{psd1}
    \end{align}
    This proves that $\nabla G(z)$ a is positive semi-definite matrix. Thus $G(z)$ is a monotone map.
\end{proof}
    
\begin{lemma}\label{thm3.5.1}
    Let ${G}(z)$ be continuously differentiable on an open convex subset of $\mathbb{R}^{n+m}$. If Assumptions \ref{ass1}-\ref{ass3} hold and $G$ is monotone for all $z\in \Omega$, then with $\alpha>0$, the projected PD dynamics \eqref{mpdd} is Lyapunov stable.
\end{lemma}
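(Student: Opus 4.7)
The plan is to establish Lyapunov stability about the unique saddle point $z^*$ using the natural quadratic candidate $V(z) = \tfrac{1}{2}\|z - z^*\|^2$, which is positive definite, radially unbounded, and vanishes precisely at $z^*$. The three tools I would assemble are: the monotonicity of $G$ from Lemma \ref{thm1}, the variational-inequality characterization of the saddle point $(v - z^*)^T G(z^*) \geq 0$ for every $v \in \Omega$ (equivalent to the fixed-point relation $z^* = P_\Omega[z^* - \alpha G(z^*)]$), and the projection inequality \eqref{proja}.

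Differentiating $V$ along \eqref{mpdd} with $\alpha = \beta = 1$, I would obtain
\[
\dot V = (z - z^*)^T \dot z = (z - z^*)^T\bigl(P_\Omega[z - \alpha G(z)] - z\bigr).
\]
Writing $\hat z := P_\Omega[z - \alpha G(z)]$ and decomposing the projected velocity as $\hat z - z = -\alpha G(z) + p$, where $p := \hat z - (z - \alpha G(z))$ is the projection residual (identically zero in the primal coordinates since $x \in \mathbb{R}^n$ is unconstrained, and componentwise non-negative in the dual coordinates), the derivative splits as
\[
\dot V = -\alpha(z - z^*)^T G(z) + (z - z^*)^T p.
\]
The first term is non-positive: monotonicity gives $(z - z^*)^T(G(z) - G(z^*)) \geq 0$, and the VI at $z^*$ tested with $v = z \in \Omega$ gives $(z - z^*)^T G(z^*) \geq 0$, so $(z - z^*)^T G(z) \geq 0$. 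For the residual term, I would split $z - z^* = (z - \hat z) + (\hat z - z^*)$ and apply the projection inequality \eqref{proja} twice: once at $z^* \in \Omega$ to make $(\hat z - z^*)^T p$ non-positive (a normal-cone argument), and once at $z \in \Omega$ to relate $(z - \hat z)^T p$ back to non-negative projection-related quantities that are then absorbed. Combining the pieces gives $\dot V \leq 0$, which implies Lyapunov stability of $z^*$.

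The hard part will be the residual cross term $(z - z^*)^T p$. Since $p$ is a non-negative correction acting only on the Lagrange-multiplier block, the inner product with $\lambda - \lambda^*$ has no component-wise sign and cannot be bounded naively. The argument must combine both forms of the normal-cone inequality at $\hat z$ in tandem—evaluated against $z^*$ and against the current state $z$—so that the indefinite part of the residual cancels against the strictly non-positive monotone term $-\alpha(z - z^*)^T G(z)$. The fact that $p$ lives entirely in the dual block localizes this bookkeeping to the projection onto $\mathbb{R}^m_{\geq 0}$, which is what ultimately makes the argument close.
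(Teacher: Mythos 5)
Your toolkit (monotonicity, the VI characterization of $z^*$, and the projection inequality) is the right one, but the proof cannot be closed with the candidate $V(z)=\tfrac{1}{2}\norm{z-z^*}^2$: that function is simply not a Lyapunov function for the \emph{globally} projected system \eqref{mpdd}, and the residual cross term you flag as ``the hard part'' is genuinely unabsorbable. Carrying out your decomposition exactly, with $\hat z = P_\Omega[z-\alpha G(z)]$ and $p=\hat z-(z-\alpha G(z))$, one gets
\begin{equation*}
\dot V=-\alpha(z-z^*)^TG(z)+(z-z^*)^Tp,\qquad (z-z^*)^Tp\le (z-\hat z)^Tp=-\norm{z-\hat z}^2+\alpha\, G(z)^T(z-\hat z),
\end{equation*}
where the first bound is your normal-cone inequality tested at $z^*$; but your second application, tested at $z\in\Omega$, gives $(z-\hat z)^Tp\ge 0$ --- the \emph{wrong} sign --- and the leftover cross term $\alpha G(z)^T(z-\hat z)$ is indefinite and cannot cancel against $-\alpha(z-z^*)^TG(z)$, which is merely nonpositive (it vanishes identically along the skew directions of saddle dynamics). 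A concrete instance satisfying all hypotheses of the lemma kills the candidate: take $n=m=1$, $f(x)=\tfrac{\mu}{2}x^2$ with $\mu=0.1$, $g(x)=x$, $\alpha=\beta=1$, so $G(z)=(\mu x+\lambda,\,-x)$ and $z^*=(0,0)$. At $z=(-1,\,0.5)$ one computes $u=z-G(z)=(-1.4,\,-0.5)$, $\hat z=(-1.4,\,0)$, $\dot z=(-0.4,\,-0.5)$, hence $\dot V=(z-z^*)^T\dot z=0.4-0.25=0.15>0$. Here $p=(0,\,0.5)$ indeed lives in the dual block and is componentwise nonnegative as you note, yet $(\lambda-\lambda^*)p_\lambda=0.25$ strictly dominates the monotone term $-(z-z^*)^TG(z)=-0.1$. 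So no bookkeeping with the two projection inequalities can rescue the quadratic: the Euclidean distance to the saddle point is genuinely nonmonotone along \eqref{mpdd}. (Your argument \emph{would} work for the discontinuous, locally projected flow $\dot z=\Pi_\Omega(z,-G(z))$, where the normal-cone inequality is anchored at $z$ itself; the subtlety of \eqref{mpdd} is that $p$ is anchored at $\hat z$.)

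The paper's proof avoids this precisely by augmenting the quadratic with the duality-gap terms, $V(z)=(L(x^*,\lambda^*)-L(x^*,\lambda))+(L(x,\lambda^*)-L(x^*,\lambda^*))+\tfrac{1}{2}\norm{z-z^*}^2$ as in \eqref{Vz}. Differentiating the extra terms contributes a gradient block to $\nabla V$, so that $\dot V=-[G(z)+z-z^*]^T(z-\tilde z)$ as in \eqref{ff}, and then Gao's inequality \eqref{use1} --- which is \eqref{proja} with $u=z-\alpha G(z)$ tested at $z^*$, i.e.\ exactly your first normal-cone step --- applies to the \emph{whole} bracket and yields $\dot V\le -(z-z^*)^TG(z)-\norm{z-\tilde z}^2$. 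In other words, the problematic cross term $\alpha G(z)^T(z-\tilde z)$ is precisely the contribution supplied by the new terms in $\nabla V$, and it is consumed inside \eqref{use1} rather than left over; the remaining term is then handled the way you propose, via monotonicity together with the VI at $z^*$. If you replace your candidate by this augmented $V$, the rest of your outline goes through.
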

\begin{proof}
    For the Lagrangian function \eqref{lg}, the following inequalities always hold: $    {L}(x^*,\lambda^*)-{L}(x^*,\lambda)\geq 0$ and $    {L}(x,\lambda^*)-{L}(x^*,\lambda^*)\geq 0$.
    Let us define the Lyapunov function as follows:
    \begin{align}
    V(z) &= ({L}(x^*,\lambda^*)-{L}(x^*,\lambda))+({L}(x,\lambda^*)-{L}(x^*,\lambda^*))\nonumber\\
    &~~~+\frac{1}{2}\norm{z-z^*}^2.\label{Vz}
    \end{align}
    The last term in \eqref{Vz} ensures that $V(z) \geq \frac{1}{2}\norm{z-z^*}^2,\forall z \in \Omega$, thus also ensures the boundedness of the level sets of $V(z)$.
    
    Differentiating $V(z)$ along the trajectories of \eqref{mpdd} yields,
    \begin{align}
    \dot{V}(z)&=\nabla V(z)\dot{z}\nonumber\\
    &=-[(\nabla {L}(x,\lambda^*)-\nabla {L}(x^*,\lambda))+z-z^*]^T(z-\tilde{z})\nonumber\\
    &=-[G(z)+z-z^*]^T(z-\tilde{z}) \label{ff}
    \end{align}
    using \eqref{proja} with $u=z-\alpha G(z)$ and $z=z^*$, we get the following, \cite{gao2003exponential},
    \begin{equation}
    [z-z^*+\alpha G(z)]^T(z-\tilde{z})\geq \norm{z-\tilde{z}}^2+\alpha (z-z^*)^T{G}(z). \label{use1}
    \end{equation}
    Using \eqref{use1} in \eqref{ff} yields,
    \begin{align}
    \dot{V}(z)&\leq -(z-z^*)^T{G}(z)-\norm{z-\tilde{z}}^2\nonumber\\
    &\leq -(z-z^*)^T(G(z)-G(z^*))-\norm{z-\tilde{z}}^2 \label{ineq1}
    \end{align}
    Using Lemma \ref{thm1} in \eqref{ineq1}, we get $\dot{V}(z)\leq 0$. This proves that the projected dynamical system \eqref{mpdd} is stable in the sense of Lyapunov.
\end{proof}
We see that the results of Lemma \ref{thm1} and \ref{thm3.5.1} are not sufficient to prove exponential stability of the proposed dynamics \eqref{mpdd}. This however is overcome by adapting the framework of natural gradient \cite{amari1998natural}, which allows us to prove that the gradient $G(z)$ is strongly monotone. This property is later exploited to prove that the proposed dynamics is exponentially stable as discussed in the subsequent section.

\subsection{Geometry of the Primal-Dual Dynamics}
First we employ the framework of fiber bundles to understand the geometry of the gradient map \eqref{gu}. 
% We begin by framing the saddle-point gradient dynamics \eqref{uncon} in a fiber-bundle setting endowed with a Riemannian metric $R$ that captures the geometry of the gradient dynamics. The metric $R$ induces a unique decomposition of the target space into a horizontal and vertical distributions. The integrability of the connection ensures the existence of a horizontal manifold $\mathcal{M}$ which contains a copy of the desired behavior of the primal variable $x$. Conditions based on the horizontal contraction guarantee the attractivity of the manifold $\mathcal{M}$ in the sense that all off manifold trajectories converge to the desired trajectories of the variable $x$ in $\mathcal{M}$.
% This ensures the convergence of the saddle-point gradient dynamics to the saddle-point $(x^*,\lambda^*)$.
Assume that the proposed dynamics is embedded into a tuple $(X,\mathcal{M})$ with $X$ a manifold and $\mathcal{M}$ a fiber manifold above $X$, with projection $\Pi : \mathcal{M} \rightarrow X$. $X$ is considered as a state-space of primal variables while the fibers of $\mathcal{M}$ are along the space of the dual variables, $\Lambda$. If $x\in X \subset \mathbb{R}^n$, $\lambda\in \Lambda \subseteq \mathbb{R}^m_{\geq 0}$ then $\mathcal{M} = (X,\Lambda)$ with coordinates $(x,\lambda)$.
The tangent space of $\mathcal{M}$, denoted by $T_\mathcal{M}$ has coordinates $(\dot{x},\dot{\lambda})$.

Consider the function  
\begin{align}
    Q(x,\lambda) = \lambda -g(x).\label{nm5}
\end{align}
and the implicit surface $\mathcal{M}=\{x|\lambda - g(x) = 0\}$.
With \eqref{nm5}, if $x\in X$ the primal dynamics in \eqref{mpdd} on the manifold $\mathcal{M}$ reduces to:
\begin{align}
    \dot{x} = - \nabla f(x) - \Big( \frac{\partial g}{\partial x} \Big)^Tg(x). \label{onlyx}
\end{align}
It follows that \eqref{onlyx} is exponentially stable. 
\begin{lemma}
    The gradient dynamics \eqref{onlyx} is exponentially stable.
\end{lemma}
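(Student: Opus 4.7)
My plan is to recognize \eqref{onlyx} as an unconstrained Euclidean gradient flow of a strongly convex potential, and then close the argument with a quadratic Lyapunov function. The first step is to observe that the right-hand side of \eqref{onlyx} equals $-\nabla\phi(x)$ for the augmented objective $\phi(x):=f(x)+\tfrac{1}{2}\norm{g(x)}^2$. Indeed the chain rule gives $\nabla\phi(x)=\nabla f(x)+(\partial g/\partial x)^{T}g(x)$, which matches the right-hand side of \eqref{onlyx} exactly.

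The second step is to verify that $\phi$ is strongly convex with modulus $\mu$. By Assumption \ref{ass2} the constraint map is linear, so $g(x)-g(y)=A(x-y)$, and therefore
\begin{equation*}
(\nabla\phi(x)-\nabla\phi(y))^{T}(x-y) = (\nabla f(x)-\nabla f(y))^{T}(x-y) + (x-y)^{T}A^{T}A(x-y).
\end{equation*}
Assumption \ref{ass1} bounds the first term below by $\mu\norm{x-y}^{2}$ while the second is non-negative, so $\nabla\phi$ is strongly monotone with modulus $\mu$. In particular $\phi$ has a unique minimizer $\bar x$ at which $\nabla\phi(\bar x)=0$, and this $\bar x$ is the unique equilibrium of \eqref{onlyx}.

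For the last step I would take the Lyapunov candidate $V(x)=\tfrac{1}{2}\norm{x-\bar x}^{2}$. Differentiating along \eqref{onlyx} and using strong monotonicity of $\nabla\phi$ gives
\begin{equation*}
\dot V = -(x-\bar x)^{T}\bigl(\nabla\phi(x)-\nabla\phi(\bar x)\bigr) \leq -\mu\,\norm{x-\bar x}^{2} = -2\mu V,
\end{equation*}
so by the comparison lemma $\norm{x(t)-\bar x}\leq\norm{x(0)-\bar x}\,e^{-\mu t}$, establishing global exponential stability with rate $\mu$.

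I do not anticipate any substantial obstacle, since this is essentially a textbook gradient-flow-under-strong-convexity calculation once the potential $\phi$ is identified. The one point worth flagging is interpretive rather than technical: the equilibrium $\bar x$ of the restricted flow on $\mathcal{M}$ need not coincide with the primal optimum $x^{*}$ of \eqref{cvx}, because imposing $\lambda=g(x)$ does not reproduce the complementarity and sign conditions \eqref{k1}--\eqref{k3}. The statement of the lemma should therefore be read as exponential stability of the reduced dynamics on the fibre manifold, not as convergence to the saddle point of \eqref{lg}.
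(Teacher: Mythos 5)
Your proof is correct, and it takes a genuinely different route from the paper's. You identify \eqref{onlyx} as the Euclidean gradient flow of the penalty function $\phi(x)=f(x)+\tfrac{1}{2}\norm{g(x)}^2$, verify that $\nabla\phi$ inherits strong monotonicity with modulus $\mu$ from Assumption \ref{ass1} (linearity of $g$ makes the extra term $A^TA\geq 0$), and then run the standard distance-to-equilibrium argument with $V(x)=\tfrac{1}{2}\norm{x-\bar x}^2$, obtaining the explicit rate $e^{-\mu t}$. The paper instead uses a Krasovskii-type function $V(x)=\tfrac{1}{2}\dot x^T\dot x$: differentiating the vector field along trajectories gives $\dot V\leq -\gamma V$, i.e., exponential decay of the \emph{velocity} rather than of the distance to an equilibrium; this avoids having to locate the equilibrium in advance, but strictly speaking it only shows $x(t)$ converges exponentially to \emph{some} limit, which must then be identified as the unique equilibrium --- a step the paper leaves implicit. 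Your route buys an explicit characterization of that equilibrium as the unique zero of the strongly monotone map $\nabla\phi$, a clean modulus-$\mu$ rate, and it dispenses entirely with the scalar $k$ and the condition $k>\sqrt{q_2}$ of \eqref{kcond0}, which enter the paper's bound $\dot V\leq \dot x^T[-\nabla^2 f(x)-k]\dot x$ without clear justification at this point (the term $-(\partial g/\partial x)^T(\partial g/\partial x)\leq 0$ can simply be dropped, and $\nabla^2 f\geq \mu I$ already closes the estimate; the $k$ appears to be imported for later use in the Riemannian metric). One caveat you share with the paper: Assumption \ref{ass1} posits strong monotonicity of $\nabla f$ only on $X$, while both proofs implicitly use it along the entire unconstrained trajectory in $\mathbb{R}^n$. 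Finally, your interpretive remark is apt and worth keeping: the equilibrium $\bar x$ minimizes $f(x)+\tfrac{1}{2}\norm{Ax-b}^2$ and in general differs from the primal optimizer $x^*$ of \eqref{cvx}, which is consistent with the paper's own follow-up observation that the manifold $\mathcal{M}$ is not globally attractive.
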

\begin{proof} Assume $g(x)= Ax-b$ and consider the following Krasovskii-type Lyapunov candidate function $V(x) = \frac{1}{2}\dot{x}^T\dot{x}$. Differentiating it along the trajectories of \eqref{onlyx}, we get
\begin{align}
    \dot{V}&=\dot{x}\Big(-\nabla^2 f(x)\dot{x}
    - \Big[\frac{\partial^2 g}{\partial^2 x}\Big]^Tg(x)\dot{x}
    - \Big[\frac{\partial g}{\partial x}\Big]^T\Big[\frac{\partial g}{\partial x}\Big]\dot{x}\Big)\\
    &\leq \dot{x}^T[-\nabla^2f(x)-k]\dot{x}\\
    &\leq -\gamma V(x)\label{vdotdotdot}
\end{align}
where $\gamma>0$ and $k$ is chosen as
\begin{align}
    k>\sqrt{q_2}.\label{kcond0}
\end{align}
\end{proof}
Global attractivity of the manifold $\mathcal{M}$ would ensure that the proposed dynamics in \eqref{mpdd} is globally exponentially stable but since it is known that this is not the case we study the geometry of the problem and identify conditions to improve the convergence rates to the fixed point of \eqref{mpdd}.
% Define gradient of $Q$ with respect to $x$ as $\nabla Q = \begin{bmatrix}
%     -\frac{\partial g}{\partial x} & I
%     \end{bmatrix}
% $
% \begin{remark}
% $\frac{\partial g}{\partial x}$ is perpendicular to the surface $g(x) = 0$.
% \end{remark}
% Hence, if $\dot{x}$ is a velocity vector, then set $\hat{X} =\{ x\in X | g(X) = 0\}$ is defined such that $ \big(\frac{\partial g}{\partial x}\big)^T\dot{x} = 0~\mathrm{if}~x\in \hat{X}$ and $\big(\frac{\partial g}{\partial x}\big)^T\dot{x} \neq  0~\mathrm{if}~x\notin \hat{X}$.
% Similarly, when the constraints are satisfied, $\lambda$ is a constant. Hence, $I\dot{\lambda}=0$.

\subsubsection{Constructing a Riemannian metric}
Let $R$ define a semi-Riemannian metric on space $T_\mathcal{M}$ that endows a semi-Riemannian structure to the fiber-bundle $(x,\lambda)\in \mathbb{R}^{n \times m}$, as shown below:
\begin{align}
    R &= \nabla Q^T(x)\nabla Q(x)\\
    &=\begin{bmatrix}
    \big(\frac{\partial g}{\partial x}\big)^T\frac{\partial g}{\partial x} & \big(-\frac{\partial g}{\partial x}\big)^T\\
    -\frac{\partial g}{\partial x} & I
    \end{bmatrix}\label{psedoR}
\end{align} 
$R$ is a semi metric on the space $X \times \Lambda$ with connection $\big(\frac{\partial g}{\partial x}$.
The metric $R$ can be made symmetric positive definite by introducing in it the parameter $k$ such that $k$ scales the connection term and for $k=I$ the original connection as defined by the above semi-Riemann metric is obtained, i.e 
\begin{align}
    R =\begin{bmatrix}
    \big(\frac{\partial g}{\partial x}\big)^T\frac{\partial g}{\partial x} & \big(-\frac{\partial g}{\partial x}\big)^T\\
    -\frac{\partial g}{\partial x} & kI
    \end{bmatrix}.\label{rm1}
\end{align}

We are now in a position to develop an understanding of the geometry of the proposed dynamics on a Riemannian manifold $(\mathcal{M},R)$. The following definition will be useful in understanding the concept of a linear connection with respect to the manifold $(\mathcal{M},R)$,\cite{10.2307/j.ctt1bpm9t5}.
\begin{definition}
Let $\Pi: \mathcal{M} \rightarrow X$ be a smooth fiber bundle, a tangent vector $\nu \in T_{p}\mathcal{M},~p \in \mathcal{M}$, is said to be vertical if $\Pi_{x_p}(\nu) = 0$. $V(p)$ denotes the set of all vertical tangent vectors in $P$. A distribution $H$ on $\mathcal{M}$ is said to be horizontal if $T_p\mathcal{M} = V_p \oplus H(p)$ for all $p \in \mathcal{M}$. 
\end{definition}
\begin{remark}
If $H$ is horizontal, it implies that for all $p\in \mathcal{M}$, $H(p)$ is a linear subspace of $T_p\mathcal{M}$ with the following properties:
\begin{align}
    \dim H(p) &= \dim X\\
    H(p) \cap \mathcal{M}(p) &= \Phi
\end{align}
$\Pi_{X_p}$ maps $H(p)$ isometrically onto $T_{\Pi(p)}X$.
\end{remark}
A connection in the bundle $\mathcal{M}$ is due to a unique splitting scheme of the tangent space $T_p\mathcal{M}$ into a horizontal and vertical space as shown in Remark \ref{remark33} in the Appendix section. The preferred direction of the vertical vector is along the fibers of $\mathcal{M}$. 

\subsubsection{Strongly monotone gradient of the Lagrangian}\label{3.a}
The natural gradient of $L$ at $z\in \mathcal{M}$ is a unique tangent vector $\mathrm{grad} L$ given as
\begin{equation}
\langle \mathrm{grad} L,v\rangle_r=D_zL(v),\forall v\in T_z\mathcal{M}. \label{lgre}
\end{equation}
In the matrix notation, \eqref{lgre} implies the following
\begin{equation}
\mathrm{grad}_r L = R^{-1}\nabla L^T \label{gradL}
\end{equation}
where $\nabla L = G(z)$ is the gradient vector of $L$ on Euclidean space $\mathbb{R}^{m+n}$. 

Denote $G_r(z)=\mathrm{grad}_rL$, the linear map $\mathbb{H}_{G_r(z)}:T_z\mathcal{M}\rightarrow\mathcal{M}$ assigned to each point $z\in \mathcal{M}$ is defined by the Hessian of $L$, denoted by $\mathbb{H}_{G_r}(z)v=\nabla_v G_r(z)=R^{-1}\nabla G(z),\forall v\in T_z\mathcal{M}$.

The projection operator $P^r_\mathcal{M}:\mathbb{R}^{n+m}\rightarrow \mathcal{M}$ defined as
\begin{align*}
P^r_\mathcal{M}(z)=\arg\min_{v\in T_z\mathcal{M}}\norm{z-v}^2_r.
\end{align*}
Correspondingly, the projected PD dynamics on $\mathcal{M}$ is defined as follows:
\begin{equation}
\dot{z}=\beta\{P^r_\mathcal{M}[z-\alpha G_r(z)]-z\}.\label{mpddr}
\end{equation}

Replacing $g(x)$ by $Ax-b$ as defined in Assumption \ref{ass2}, we define the gradient vector $G_r(z) \in T_z\mathcal{M}$ as follows:
\begin{align}
G_r(z) &= R^{-1}G(z),\nonumber\\
&=\begin{bmatrix}
k\nabla f(x)-A^TAx+kA^T\lambda+A^Tb\\
A\nabla f(x)-kAx+AA^T\lambda+kb
\end{bmatrix}\label{newgrad}
\end{align} 

In the following section, it is proved that the gradient map \eqref{newgrad} is strongly monotone.
\begin{proposition}\label{thm3.4}
    Consider the optimization problem \eqref{cvx} and let $(\mathcal{M},r)$ be a $n+m$-dimensional smooth manifold. If Assumption \ref{ass1} and \ref{as4} hold for the problem \eqref{cvx}, then with the linear map $R^{-1}:T_z\mathcal{M}\rightarrow T_z\mathcal{M}$, the gradient vector $G_r(z)$ is strongly monotone.
\end{proposition}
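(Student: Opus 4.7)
The plan is to establish strong monotonicity of $G_r$ in the Euclidean inner product by a direct expansion that exploits a built-in cancellation in the structure of $R^{-1}G$. Write $\Delta x := x_1-x_2$, $\Delta \lambda := \lambda_1 - \lambda_2$, and $\delta := \nabla f(x_1)-\nabla f(x_2)$. Using the closed form \eqref{newgrad} (valid under Assumption \ref{ass2}),
\begin{equation*}
G_r(z_1)-G_r(z_2)=\begin{bmatrix} k\delta + kA^T\Delta\lambda - A^TA\Delta x \\ A\delta - kA\Delta x + AA^T\Delta\lambda \end{bmatrix}.
\end{equation*}
I would then take the inner product with $(\Delta x,\Delta \lambda)$ and observe the crucial cancellation: the skew contributions $k\Delta x^TA^T\Delta \lambda$ from the primal block and $-k\Delta \lambda^T A \Delta x$ from the dual block annihilate one another, leaving
\begin{equation*}
(z_1-z_2)^T\bigl(G_r(z_1)-G_r(z_2)\bigr)=k\,\Delta x^T\delta + (A^T\Delta\lambda)^T\delta - \|A\Delta x\|^2 + \|A^T\Delta\lambda\|^2.
\end{equation*}

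The next step is to bound each surviving term. From Assumption \ref{ass1}, $k\,\Delta x^T\delta \geq k\mu\|\Delta x\|^2$. From Assumption \ref{as4}, $\|A^T\Delta\lambda\|^2\geq q_1\|\Delta\lambda\|^2$; moreover, since the nonzero eigenvalues of $A^TA$ coincide with those of $AA^T$, one also gets $\|A\Delta x\|^2\leq q_2\|\Delta x\|^2$. The substantive obstacle is the indefinite mixed term $(A^T\Delta\lambda)^T\delta$, which couples $\delta$ to $\Delta\lambda$ and can be negative. I would control it by Young's inequality,
\begin{equation*}
(A^T\Delta\lambda)^T\delta \geq -\tfrac{\epsilon}{2}\|A^T\Delta\lambda\|^2 - \tfrac{1}{2\epsilon}\|\delta\|^2,\qquad \epsilon>0,
\end{equation*}
and then invoke Lipschitz continuity of $\nabla f$ (a standard companion assumption implicit in the $C^2$ setting of \eqref{cvx}) to replace $\|\delta\|^2\leq L_f^2\|\Delta x\|^2$.

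Assembling the estimates gives
\begin{equation*}
(z_1-z_2)^T\bigl(G_r(z_1)-G_r(z_2)\bigr)\geq \Bigl(k\mu - q_2 - \tfrac{L_f^2}{2\epsilon}\Bigr)\|\Delta x\|^2 + \Bigl(1-\tfrac{\epsilon}{2}\Bigr)q_1\|\Delta\lambda\|^2.
\end{equation*}
Picking $\epsilon\in(0,2)$ renders the $\|\Delta\lambda\|^2$-coefficient strictly positive, and then choosing $k>(q_2+L_f^2/(2\epsilon))/\mu$ renders the $\|\Delta x\|^2$-coefficient strictly positive, delivering strong monotonicity of $G_r$ with modulus equal to the minimum of the two coefficients. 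The main obstacle, as flagged above, is the mixed $(A^T\Delta\lambda)^T\delta$ term; handling it is also what gives the scalar $k$ its dual role of both ensuring that the metric \eqref{rm1} is positive definite and producing the strong monotonicity required by the Lyapunov argument for exponential convergence.
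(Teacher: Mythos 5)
Your proof is correct and reaches the proposition's conclusion by a genuinely different route from the paper. The paper works at the second-order level: it invokes the criterion that a strongly positive definite Jacobian implies strong monotonicity (Proposition \ref{monoto}), computes $\nabla G_r(z)$, symmetrizes it --- which is where the skew blocks $kA^T$ and $-kA$ disappear, the Jacobian-level counterpart of the cancellation $k\Delta x^TA^T\Delta\lambda - k\Delta\lambda^TA\Delta x = 0$ that you observe directly --- and then forces positive definiteness of the symmetric part through a Schur complement of the block $AA^T$, arriving via the Courant--Fischer theorem at the threshold $k>\lambda_{max}(A^TAH^{-1}+\frac{1}{2}q_1H^{-1}+\frac{1}{2}H)$ with $H=\nabla^2 f(x)$ and modulus $\nu\geq q_1/2$. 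You instead expand the first-order difference $(z_1-z_2)^T\bigl(G_r(z_1)-G_r(z_2)\bigr)$ in closed form and tame the single indefinite term $(A^T\Delta\lambda)^T\delta$ with Young's inequality; your bounds $\|A^T\Delta\lambda\|^2\geq q_1\|\Delta\lambda\|^2$ and $\|A\Delta x\|^2\leq q_2\|\Delta x\|^2$ are legitimate uses of Assumption \ref{as4} (the latter because $A^TA$ and $AA^T$ share nonzero spectrum). What your route buys: it is more elementary (no Jacobian criterion, no Schur complement), and it yields an explicit modulus $\min\{k\mu-q_2-L_f^2/(2\epsilon),\,(1-\epsilon/2)q_1\}$ and a threshold for $k$ in the fixed constants $\mu,q_1,q_2,L_f$, rather than the $x$-dependent spectral quantity in \eqref{mkcond1}, whose uniform boundedness in $x$ the paper tacitly assumes (and whose derivation there involves some loose matrix manipulations, e.g.\ post-multiplying the inequality by $(2H)^{-1}$). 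The price is your added hypothesis that $\nabla f$ is globally Lipschitz --- as you correctly flag, $C^2$ alone does not give this on all of $\mathbb{R}^n$ --- but this is no real weakening relative to the paper, which needs bounded $\nabla^2 f$ implicitly for \eqref{mkcond1} to define a finite $k$ and assumes Lipschitz continuity of $G_r$ outright in Theorem \ref{thm3.5}. One further point in your favor: you prove strong monotonicity in the Euclidean inner product, which is what the paper's own computation actually establishes (condition \eqref{gradg} is Euclidean positive definiteness of $R^{-1}\nabla G+\nabla G^TR^{-1}$), even though the paper's concluding display \eqref{ffinal} carries $r$-subscripts; since $R$ in \eqref{rm1} is symmetric, literal strong monotonicity of $G_r=R^{-1}G$ in $\langle\cdot,\cdot\rangle_r$ would reduce to Euclidean strong monotonicity of $G$ itself, which fails by Lemma \ref{thm1}, so your Euclidean reading is the defensible one.
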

\begin{proof}
    For $G_r(z)$ to be strongly monotone, $\nabla_z {G}_r$ must be  positive definite\cite{karamardian1990seven}, i.e., for the symmetric part of $\nabla_z {G}_r$, i.e. $\frac{1}{2}\nabla {G}_r+\frac{1}{2}\nabla {G}^T_r$, the following must hold:
    \begin{align} 
    \nabla {G}_r+\nabla {G}^T_r&=R^{-1}\nabla G+\nabla G^TR^{-1},\nonumber\\
    &\geq \nu \mathrm{I},\forall z \in \mathcal{M},\forall t\label{gradg}
    \end{align} where $\nu>0$ is a constant, $\mathrm{I}$ is an identity matrix of appropriate dimensions.
    
    The Jacobian of $G_r(z)$, denoted by $\nabla G_r(z)$ is given below:
    \begin{align*}     
    \nabla G_r(z) = \begin{bmatrix}
    k\nabla^2 f(x) - A^TA & kA^T\\
    A\nabla^2f(x)-kA    & AA^T
    \end{bmatrix}
    \end{align*}
    The symmetric part of $\nabla G_r(z)$ is obtained as:
    \begin{align}     
    &\frac{\nabla {G}_r(z)+\nabla {G}^T_r(z)}{2}\nonumber \\= &\begin{bmatrix}
    k\nabla^2 f(x) - A^TA & \frac{1}{2}(A\nabla^2f(x))^T\\
    \frac{1}{2}A\nabla^2f(x)    & AA^T
    \end{bmatrix}\label{nablaGr}
    \end{align}
    Let $\mathrm{M}=\nabla {G}_r(z)+\nabla {G}^T_r(z)-q_1\mathrm{I}>0$. Then
    \begin{align}
    \mathrm{M} &= \begin{bmatrix}
    2k\nabla^2 f(x) - 2A^TA -q_1\mathrm{I} & (A\nabla^2f(x))^T\\
    A\nabla^2f(x)    & 2AA^T-q_1\mathrm{I}
    \end{bmatrix}\nonumber\\
    &\geq \begin{bmatrix}
    2k\nabla^2 f(x) - 2A^TA -q_1\mathrm{I} & (A\nabla^2f(x))^T\\
    A\nabla^2f(x)    & AA^T
    \end{bmatrix}\label{Ma}.
    \end{align} 
    Further let $\mathrm{S}=AA^T$, then the Schur compliment of the block $\mathrm{S}$ of the matrix $\mathrm{M}$, denoted by $\mathrm{S}_{\mathrm{Schur}}$ is derived as
    \begin{align} 
    \mathrm{S}_{\mathrm{Schur}} &= 2k\nabla^2 f(x) - 2A^TA-q_1I\nonumber\\
    &-(A\nabla^2f(x))^T(AA^T)^{-1}A\nabla^2f(x).\label{m/s}
    \end{align}
    Let $\mathrm{H}=\nabla^2f(x)$ for the notational simplicity. Note that in \eqref{pqrs}, $2k\mathrm{H}>0,\forall k>0$, $2A^TA \geq 0$, $q_1I>0$, and $0\leq \mathrm{H}A^T(AA^T)^{-1}A\mathrm{H} \leq \mathrm{H}^2$. The last terms is a consequence of $A^T(AA^T)^{-1}A\leq I$. Rearranging \eqref{m/s} as given below 
    \begin{align}
    2k\mathrm{H} &> 2A^TA+q_1I+\mathrm{H}A^T(AA^T)^{-1}A\mathrm{H}\nonumber\\
    2k\mathrm{H} &> 2A^TA+q_1I+\mathrm{H}^2\label{pqrs}
    \end{align}
    allows to choose $k$ such that $\mathrm{S}_{\mathrm{Schur}}>0$.
    Post multiplying \eqref{pqrs} by $(2H)^{-1}$ yields the following:
    \begin{align}
    2kI &>  2A^TA(2H)^{-1}+q_1(2H)^{-1}+\mathrm{H}^2(2H)^{-1}\nonumber\\
    kI &> A^TAH^{-1}+\frac{1}{2}q_1H^{-1}+\frac{1}{2}\mathrm{H}\label{kcond}.\end{align}
    
    Applying Courant-Fischer theorem \cite{horn1990matrix} to \eqref{kcond} yields the following:
    \begin{equation}
    \lambda_{max}(kI)>\lambda_{max}(A^TAH^{-1}+\frac{1}{2}q_1H^{-1}+\frac{1}{2}\mathrm{H}).\label{mkcond}
    \end{equation}
    Since $\lambda_{max}(kI) = k$, \eqref{mkcond} has the following form:
    \begin{equation}
    k>\lambda_{max}(A^TAH^{-1}+\frac{1}{2}q_1H^{-1}+\frac{1}{2}\mathrm{H}).\label{mkcond1}
    \end{equation}
    By choosing $k$ as given in \eqref{mkcond1} ensures that $\mathrm{S}_{\mathrm{Schur}}>0$. 
    But $k$ must also satisfy \eqref{kcond0}, thus $k$ must be chosen such that the following holds:
    \begin{equation}
    k > \max\{\sqrt{q_2},\lambda_{max}(A^TAH^{-1}+\frac{1}{2}q_1H^{-1}+\frac{1}{2}\mathrm{H})\} \label{choosek}
    \end{equation}
    ensures that both \eqref{kcond0} and \eqref{mkcond} are met. If $k$ is chosen according to \eqref{choosek}, then $\mathrm{S}_{\mathrm{Schur}}>0$ holds such that there exists a $\nu \geq \frac{q_1}{2}$ which implies that 
    \begin{equation}
    \langle \mathbb{H}_{G_r(z)}v,v\rangle_r \geq \nu\norm{v}^2_r,\forall v \in T_z\mathcal{M}.
    \end{equation}        
    Hence it follows that
    \begin{equation}
    \langle {G}_r(z_1)-{G}_r(z_2),z_1-z_2\rangle_r\geq \nu \norm{z_1-z_2}^2_r. \label{ffinal}
    \end{equation}
    Hence it is proved that $G_r(z)$ is strongly monotone.
\end{proof}

\subsubsection{Exponential stability}\label{3.b}
Without loss of generality, let us define $G_r(z)$ similar to \eqref{gu} as follows:
\begin{equation}
G_r(z)=\begin{bmatrix}
\nabla^r_x {L}(x,\lambda)\\
-\nabla^r_\lambda {L}(x,\lambda)
\end{bmatrix}, \label{gut}
\end{equation} where ${L}(x,\lambda)$ would represent the modified Lagrangian function whose gradient vector field is given by $G_r(z)$. Since, $G_r(z)$ is strongly monotone on $\mathcal{M}$, \eqref{mpddr} will converge to a unique saddle-point solution $z^*$.
\begin{theorem}\label{thm3.5}
    Let $G_r(z)$ be Lipschitz continuous on an open set including $\mathcal{M}$, then inequality \eqref{ffinal} and $\alpha>0$, imply that the system \eqref{mpddr} with $z(0) \in \mathcal{M}$ is globally exponentially stable at the unique solution $z^*$ of \eqref{spp}.
\end{theorem}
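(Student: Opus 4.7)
The plan is to use the quadratic Lyapunov function $V(z) := \tfrac{1}{2}\|z-z^*\|_r^2$ in the Riemannian inner product and to show $\dot V \le -c\,V$ along the flow of \eqref{mpddr}, yielding $\|z(t)-z^*\|_r \le e^{-ct/2}\|z(0)-z^*\|_r$; since under Assumption~\ref{ass2} the Jacobian $\partial g/\partial x = A$ is constant, the metric $R$ in \eqref{rm1} is itself constant, so $\|\cdot\|_r$ is equivalent to the Euclidean norm (bounds being controlled by Assumption~\ref{as4}), and this upgrades to genuine global exponential stability in the original sense.

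First I would differentiate $V$ along \eqref{mpddr}. With $\tilde z := P^r_{\mathcal M}[z-\alpha G_r(z)]$ and $\beta=1$, we get $\dot V = \langle z-z^*,\,\tilde z-z\rangle_r$. Decomposing $\tilde z-z = (\tilde z-z^*)-(z-z^*)$ and invoking the Riemannian analogue of the projection inequality \eqref{proja} with $u = z-\alpha G_r(z)$ and test point $v = z^*\in\mathcal M$ (a standard property of metric projection onto the closed convex set $\mathcal M$ with respect to $\langle\cdot,\cdot\rangle_r$), a short rearrangement mimicking \eqref{use1} yields
\begin{equation*}
\dot V \;\le\; -\|z-\tilde z\|_r^2 \;+\; \alpha\,\langle G_r(z),\, z^*-\tilde z\rangle_r .
\end{equation*}

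Second I would inject $G_r(z^*)$ into the inner product and split $z^*-\tilde z = (z^*-z)+(z-\tilde z)$. The variational inequality characterising the fixed point of \eqref{mpddr}, $\langle G_r(z^*),v-z^*\rangle_r \ge 0$ for every $v\in\mathcal M$ (applied at $v=\tilde z$), gives $\alpha\langle G_r(z^*), z^*-\tilde z\rangle_r \le 0$, while the strong monotonicity \eqref{ffinal} from Proposition~\ref{thm3.4} delivers $\langle G_r(z)-G_r(z^*),\,z-z^*\rangle_r \ge \nu\|z-z^*\|_r^2$. The leftover cross term $\alpha\langle G_r(z)-G_r(z^*),\,z-\tilde z\rangle_r$ is absorbed using the stipulated Lipschitz bound $\|G_r(z)-G_r(z^*)\|_r \le L\|z-z^*\|_r$ together with Young's inequality, producing a bound of the form
\begin{equation*}
\dot V \;\le\; -\tfrac{1}{2}\|z-\tilde z\|_r^2 \;-\; \Bigl(\alpha\nu - \tfrac{\alpha^2 L^2}{2}\Bigr)\|z-z^*\|_r^2 .
\end{equation*}
For $\alpha\in(0,\,2\nu/L^2)$ the bracket is strictly positive, so $\dot V \le -2(\alpha\nu-\alpha^2 L^2/2)\,V$ and the desired exponential decay follows at once.

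The step I expect to be the main obstacle is twofold. First, forward invariance of $\mathcal M$ along \eqref{mpddr} must be verified so that both the variational inequality at $z^*$ and the monotonicity of Proposition~\ref{thm3.4} apply to the whole trajectory; for constant $R$ this is the standard consequence of the projection structure, but it still deserves a careful justification in the Riemannian wrapping. Second, the Young argument above only certifies decay for $\alpha$ small, whereas the statement merely requires $\alpha>0$; covering that blanket range likely forces one to keep the term $\alpha\langle G_r(z), z-\tilde z\rangle_r$ intact and combine it directly with $-\|z-\tilde z\|_r^2$ in the spirit of the Gao-type manipulation around \eqref{use1}, rather than discarding information through a premature Young splitting — this is the delicate point of the argument.
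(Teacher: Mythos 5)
Your argument is correct, but it takes a genuinely different route from the paper's. You work with the bare quadratic function $V=\tfrac12\norm{z-z^*}^2_r$ and, after the projection inequality, inject $G_r(z^*)$, use the variational characterization $\langle G_r(z^*),v-z^*\rangle_r\ge 0$ of the equilibrium, the strong monotonicity \eqref{ffinal}, and a Lipschitz--Young absorption --- in effect you re-prove inline the result of Gao that the paper invokes as a black box. The paper instead takes the augmented Lyapunov function $V_1(z)$ consisting of the two Lagrangian differences plus $\tfrac12\norm{z-z^*}^2_r$, derives $\dot V_1\le-\langle\alpha(z-z^*),G_r(z)\rangle_r$ from the projection inequality (the Riemannian analogue of \eqref{use1}), lower-bounds the right side via the strong-monotonicity consequence \eqref{strong conv}, concludes $\dot V_1\le-\beta\min\{1,\alpha\nu\}V$, and only then cites \cite[Theorem 4]{gao2003exponential} to obtain the Lipschitz-dependent global rate \eqref{conva} under $\alpha<4\nu/\mathbf{L}^2$. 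Your version is more self-contained and avoids the intermediate inequality \eqref{strong conv}, which the paper asserts from strong monotonicity without derivation; the paper's version buys an explicit citation and an ostensibly $\alpha$-free preliminary estimate. Two smaller observations. First, your choice of Young weights (retaining $\tfrac12\norm{z-\tilde z}^2_r$) yields the admissible range $\alpha<2\nu/L^2$; absorbing the entire $\norm{z-\tilde z}^2_r$ term instead recovers $\alpha<4\nu/L^2$, matching the threshold in \eqref{conva}, so your range is conservative by a factor of two but repairable exactly as you suspect. Second, your worry about covering all $\alpha>0$ is well-placed but ultimately moot: the paper's own proof also concludes global exponential stability only for $\alpha<4\nu/\mathbf{L}^2$, so the blanket ``$\alpha>0$'' in the statement of Theorem \ref{thm3.5} is an overstatement that neither your argument nor the paper's actually delivers. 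Forward invariance of $\mathcal{M}$, which you flag as needing justification, is handled in the paper precisely as you anticipated, by appeal to Lemma \ref{lemma1.3}.
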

\begin{proof}
    For each $z(0) \in \mathcal{M}$, there exists a unique solution $z(t)$ of \eqref{mpdd}, that started from $z(0)$. If $[0,t_f)$ is the maximal interval of $z(t)$, then from Lemma \ref{lemma1.3}, $z(t) \in \mathcal{M}$ for all $t \in [0,t_f)$. Since $G_r(z)$ is strongly monotone, the following holds: $
    L(x^*,\lambda^*)-L(x^*,\lambda)> 0,
    L(x,\lambda^*)-L(x^*,\lambda^*)> 0$
    Let us define the Lyapunov function for the dynamics \eqref{mpddr} as follows:
    \begin{align}
    V_1(z) &= (L(x^*,\lambda^*)-L(x^*,\lambda))+(L(x,\lambda^*)-L(x^*,\lambda^*))\nonumber\\
    &~~~+\frac{1}{2}\norm{z-z^*}^2_r.
    \end{align}
    It is to be noted that $V_1(z)$ possesses a similar structure as that of $V(z)$ defined in \eqref{Vz}, it is also differentiable convex on $\mathcal{M}$, with $V_1(z) \geq \frac{1}{2}\norm{z-z^*}^2_r,\forall z \in \mathcal{M}$, thus bounding all level sets of $V_1(z)$.
    
    Differentiating $V_1(z)$ along the trajectories of \eqref{mpddr} yields:
    \begin{align}
    \dot{V}_1(z)&=\nabla V_1(z)\dot{z}\nonumber\\
    &=-\langle\nabla L(x,\lambda^*)-\nabla L(x^*,\lambda)+z-z^*,z-\tilde{z}\rangle_r\nonumber\\
    &=-\langle G_r(z)+z-z^*,z-\tilde{z}\rangle_r \label{eff}
    \end{align}
    Substituting $u = z-\alpha G_r(z)$ and $z=z^*$ in \eqref{proja}, yields
    \begin{equation}
    \langle z-z^*+\alpha G_r(z),z-\tilde{z}\rangle_r\geq \norm{z-\tilde{z}}^2_r+\langle\alpha (z-z^*),G_r(z)\rangle_r. \label{euse1}
    \end{equation}
    Using \eqref{euse1} in \eqref{eff} yields,
    \begin{equation}
    \dot{V}_1(z)\leq -\langle\alpha (z-z^*),G_r(z)\rangle_r.\label{V1zdot}
    \end{equation}
    If $k$ is chosen such that the condition \eqref{mkcond} is satisfied then $G_r(z)$ is strongly monotone. The strong monotonicity of $G_r(z)$ leads to the following property of the Lagrangian function $L(z),\forall z\in \mathcal{M}$,
    \begin{align}
    &\langle z-z^*,G_r(z)\rangle_r\nonumber\\ &\geq L(x,\lambda^*)-L(x^*,\lambda)+\frac{\nu}{2}\norm{z-z^*}^2_r, z \in \mathcal{M}.\label{strong conv}
    \end{align}
    
    Using \eqref{strong conv}, \eqref{V1zdot} modifies to the following
    \begin{align}
    \dot{V}_1(z)&\leq -\langle\alpha (z-z^*),G_r(z)\rangle_r\nonumber,\\
    &\leq-\alpha \beta[L(x,\lambda^*)-L(x^*,\lambda)+\frac{\nu}{2}\norm{z-z^*}^2_r],\nonumber\\
    &\leq-\alpha \beta[(L(x^*,\lambda^*)-L(x^*,\lambda))\nonumber\\
    &~~~+(L(x,\lambda^*)-L(x^*,\lambda^*))+\frac{\nu}{2}\norm{z-z^*}^2_r].
    \end{align}
    With $\alpha,\beta>0$, it can be shown that,
    \begin{equation*}
    \dot{V}_1(z)\leq -\beta\min\{1,\alpha\nu\}V(z).
    \end{equation*}
    Thus, it is proved that the system \eqref{mpdd} is exponentially stable at the unique solution $z^*$ of \eqref{spp}.
    Therefor,
    \begin{equation*}
    \norm{z-z^*}_r\leq ce^{-\beta\frac{\min\{1,\alpha\nu\}}{2}t}
    \end{equation*}
    where $c=\sqrt{2V_1(z(0))}$.
    
    Further, if $G_r(z)$ is Lipschitz continuous on $\mathcal{M}$, i.e., $\norm{{G}_r(z_1)-{G}_r(z_2)}_r\leq \mathbf{L}\norm{z_1-z_2}_r,\forall z_1,z_2\in \mathcal{M}$, where $\mathbf{L}$ is a Lipschitz constant then by using \cite[Theorem 4]{gao2003exponential} the global exponential stability of the projected PD dynamics can be established as follows:
    \begin{equation}
    \norm{z(t)-z^*}_r\leq \norm{z(0)-z^*}_re^{\frac{-\alpha \beta(4\nu-\alpha \ell^2)}{8}t},\forall t \geq 0. \label{conva}
    \end{equation}
    If $\alpha<\frac{4\nu}{\mathbf{L}^2}$, it follows that the projected PD dynamics \eqref{mpddr} is globally exponentially stable.
\end{proof}
\section{Simulation Results}\label{sim}
This section presents simulation studies of the projected PD dynamics \eqref{mpddr}. It is known that the Euler discretization of the exponentially stable dynamical system owns geometric rate of convergence \cite{stuart1994numerical} for sufficiently small step-sizes. The projected PD dynamics \eqref{mpddr} is Euler discretized with a step size $s>0$ and the following discrete-time projected PD dynamics\cite{nagurney2012projected} is obtained.
\begin{equation}
z(\tau+1)=\beta P^r_\mathcal{M}\{z(\tau)-\alpha G_r(\tau)\}.
\end{equation}

First example (Example 1) considers an optimization problem of the form \eqref{cvx} with $m=5$ and $n=10$. The Hessian matrix is assumed to be $H = 20{I}$ with $A$ and $b$ taken as Gaussian random matrix and vector respectively. The distance to the primal optimizer $x^*$ for different values of parameter $k$ is shown in Fig. \ref{eps1}, where $\varrho=\max\{\sqrt{q_2},\lambda_{max}(A^TAH^{-1}+\frac{1}{2}q_1H^{-1}+\frac{1}{2}\mathrm{H})\}$. It can be seen from the plot that the rate of convergence to the equilibrium point accelerates as the value of $k$ is increased. It implies that increasing the value of $k$ allows increasing the value of $\nu$, which further increases the coefficient of the negative exponential term in \eqref{conva}. 
% The primal optimizers $x^*$ of the problem are also compared to the optimal solution of the same problem obtained using \quotes{quadprog} solver in MATLAB environment as shown in Fig. \ref{eps2}.

In the second example, an $L_2$ regularized least squares problem is considered with $m=30$ and $n=50$. The objective function is $f(x) = \norm{Cx-d}^2_2+\frac{\theta}{2}\norm{x}^2_2$ with $\theta>0$, constrained to $Ax\leq b$. Matrices $(C,A) \in \mathbb{R}^{m \times n}$, and vectors $(d,b)\in \mathbb{R}^{m \times 1}$ are Gaussian random matrices and vectors, respectively. Parameters $\alpha,\beta$ are chosen as unity and the proposed dynamics \eqref{mpddr} is simulated for $k = 1000\max(\varrho)$. A sketch of the error norm as a function of time is shown in Fig. \ref{eps3}. It can be seen that the error norm $\norm{x_i-x^*_i}^2$ has geometric rate of convergence.
\begin{figure}[t]
    \centering
    \includegraphics[width=2.5in]{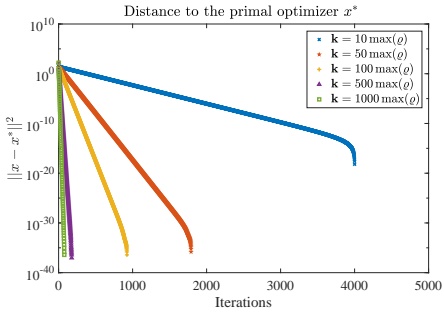}
    \caption{Distance to the primal optimizer $x^*$ for different values of $k$ (Example 1).}
    \label{eps1}
\end{figure}
% \begin{figure}[t]
%     \centering
%     \includegraphics[width=2.5in]{}
%     \caption{Distance to the dual optimizer $\lambda^*$ for different values of $k$ (Example 1).}
%     \label{eps12}
% \end{figure}
% \begin{figure}[t]
%     \centering
%     \includegraphics[width=2.5in]{}
%     \caption{Optimal solution compared with ``{QUADPROG}" (MATLAB) solver.}
%     \label{eps2}
% \end{figure}
\begin{figure}[t]
    \centering
    \includegraphics[width=2.5in]{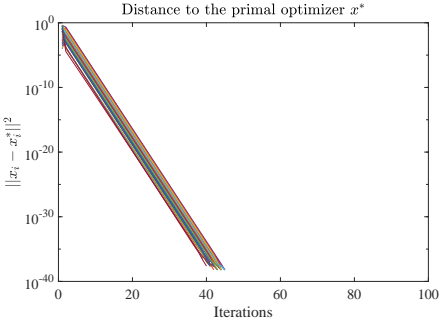}
    \caption{Distance to the primal optimizer $x^*$ ($L_2$ regularized least squares problem).}
    \label{eps3}
\end{figure}
\section{Conclusions and discussion}\label{concl} In this paper we proposed a Riemannian geometric framework with natural gradient adaptation to achieve exponentially convergent projected primal-dual dynamics when applied to linear inequality constrained optimization problems. We began by framing the proposed dynamics in a fiber-bundle setting endowed with a Riemannian metric $R$ that captures the geometry of the gradient vector. The metric $R$ induced a unique decomposition of the target space into a horizontal and vertical distributions. The natural gradient proved to be strongly monotone on $\mathcal{M}$ leading to an exponentially stable saddle-point solution. We further showed that the increasing values of $k$ result in much steeper gradient that leads to an accelerated convergence to the saddle-point solution. 

% The constraints imposed on the proposed dynamics are translated through the metric $R$ to the definition of horizontal velocity components in $v_H$. If the distribution $v_H$ is integrable then an integral curve on the horizontal space projects onto a curve in $X$ which describes the evolution of the primal variables $x$ under the flow of the proposed dynamics. 
% We emphasize on the fact that as the value of $k$ grows larger, the vertical component $v_V$ gets smaller and the horizontal component $v_H$ grows comparatively larger dominating the velocity component along the direction of $x$ (see, \eqref{the64}), thus accelerating the convergence to the saddle-point solution. The distribution $v_H$ is always integrable thus it has null curvature. Spaces with null curvature are called flat spaces and are isometric to the Euclidean space. This suggests that in order to speed up the flow on $v_H$ which is flat, the PD gradient flow must have the same characteristics that it has on the Euclidean space. 

\bibliographystyle{unsrt}        % Include this if you use bibtex 
\bibliography{references}  
                                 % bibliography (preferred). The
                                 % correct style is generated by
                                 % Elsevier at the time of printing.

%\begin{thebibliography}{99}     % Otherwise use the  
                                 % thebibliography environment.
                                 % Insert the full references here.
                                 % See a recent issue of Automatica 
                                 % for the style.
%  \bibitem[Heritage, 1992]{Heritage:92}
%     (1992) {\it The American Heritage. 
%     Dictionary of the American Language.}
%     Houghton Mifflin Company.
%  \bibitem[Able, 1956]{Abl:56}
%     B.~C.~Able (1956). Nucleic acid content of macroscope. 
%     {\it Nature 2}, 7--9. 
%  \bibitem[Able {\em et al.}, 1954]{AbTaRu:54}   
%     B.~C. Able, R.~A. Tagg, and M.~Rush (1954).
%     Enzyme-catalyzed cellular transanimations.
%     In A.~F.~Round, editor, 
%     {\it Advances in Enzymology Vol. 2} (125--247). 
%     New York, Academic Press.
%  \bibitem[R.~Keohane, 1958]{Keo:58}
%     R.~Keohane (1958).
%     {\it Power and Interdependence: 
%     World Politics in Transition.}
%     Boston, Little, Brown \& Co.
%  \bibitem[Powers, 1985]{Pow:85}
%     T.~Powers (1985).
%     Is there a way out?
%     {\it Harpers, June 1985}, 35--47.

%\end{thebibliography}
% \appendix
\section*{Appendix} 
\begin{definition}(The Variational Inequality Problem, \cite{nagurney2012projected})\label{vi_Def}\\
    For a closed convex set $X\in \mathbb{R}^n$ and vector function $F:X\rightarrow \mathbb{R}^n$, the finite dimensional variational inequality problem, $\mathrm{VI(F,X)}$, is to determine a vector $x^*\in X$ such that 
    \begin{equation}
    (x-x^*)^TF(x^*)\geq 0,~\forall x\in X. \label{vip}
    \end{equation}
\end{definition}
A variational inequality problem \eqref{vip} is equivalent to a fixed point problem given below:
\begin{proposition}(A Fixed Point Problem,\cite{nagurney2012projected})\label{fpppro}\\
    $x^*$ is a solution to $\mathrm{VI(F,X)}$ if and only if for any $\alpha >0$, $x^*$ is a fixed point of the projection map:
    \begin{equation}
    x^*=P_X(x^*-\alpha F(x^*)) \label{fppprob}
    \end{equation}
    where \begin{equation}
    P_{X}=\arg \min_{v\in X}\norm{x-v}.\label{projection}
    \end{equation}
\end{proposition}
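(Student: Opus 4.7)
The plan is to prove the equivalence via the standard variational (obtuse-angle) characterization of the minimum-norm projection onto a closed convex set. Concretely, for any $y \in \mathbb{R}^n$ and any closed convex $X \subseteq \mathbb{R}^n$, the projection $P_X(y)$ is the unique point $p \in X$ such that
\begin{equation*}
(x - p)^T(y - p) \leq 0, \quad \forall x \in X.
\end{equation*}
I would either cite this as a known fact about convex projections or derive it in a single line from the first-order optimality condition of the strongly convex program $\min_{v \in X} \tfrac{1}{2}\|y - v\|^2$ that defines $P_X$ in \eqref{projection}.

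For the forward implication, I would assume $x^*$ solves $\mathrm{VI}(F,X)$, so $(x - x^*)^T F(x^*) \geq 0$ for all $x \in X$. Fix any $\alpha > 0$ and multiply by $-\alpha < 0$ to obtain
\begin{equation*}
(x - x^*)^T\bigl((x^* - \alpha F(x^*)) - x^*\bigr) \leq 0, \quad \forall x \in X.
\end{equation*}
Setting $y = x^* - \alpha F(x^*)$ and $p = x^*$, the projection characterization (together with $x^* \in X$) gives exactly $x^* = P_X(x^* - \alpha F(x^*))$, which is \eqref{fppprob}.

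For the reverse implication, suppose $x^* = P_X(x^* - \alpha F(x^*))$ for some $\alpha > 0$; in particular $x^* \in X$. Applying the projection characterization with $y = x^* - \alpha F(x^*)$ and $p = x^*$ yields $(x - x^*)^T(-\alpha F(x^*)) \leq 0$ for every $x \in X$, and dividing through by $-\alpha < 0$ flips the inequality to $(x - x^*)^T F(x^*) \geq 0$, i.e., $x^*$ solves $\mathrm{VI}(F,X)$.

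The result is essentially a two-line consequence of the variational inequality characterization of projection, so there is no real obstacle; the only care needed is to keep track of the sign of $\alpha$ when multiplying and dividing, and to note that the equivalence holds for \emph{any} fixed $\alpha > 0$ because the two uses of the projection characterization are symmetric in $\alpha$. I would present the argument as a short "$(\Rightarrow)$ / $(\Leftarrow)$" proof, invoking the projection characterization once in each direction.
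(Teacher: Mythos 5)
Your proof is correct, and it is the standard argument: the paper itself states this proposition without proof, citing \cite{nagurney2012projected}, and the proof given there (and in classical references such as Kinderlehrer--Stampacchia) is precisely the one you outline, namely applying the obtuse-angle characterization $(x-p)^T(y-p)\leq 0$ for all $x\in X$ of the projection $p=P_X(y)$ with $y=x^*-\alpha F(x^*)$ and $p=x^*$ in each direction. The sign bookkeeping with $-\alpha<0$ and the observation that the equivalence holds for each fixed $\alpha>0$ (since the variational inequality itself is $\alpha$-free) are exactly the only points requiring care, and you handle both.
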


\begin{theorem}(Uniqueness of the Solution to Variational Inequality, \cite{nagurney2012projected})\label{uniqueo}\\
    Suppose that $F(x)$ is strongly monotone on $X$. Then there exists precisely one solution $x^*$ to $\mathrm{VI(F,X)}$.
\end{theorem}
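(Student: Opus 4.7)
The plan is to split the claim into the two natural pieces, uniqueness and existence, handling uniqueness directly from the defining variational inequality and strong monotonicity, then passing to the fixed-point formulation of Proposition \ref{fpppro} to settle existence. Throughout, I will implicitly use that $F$ is continuous on the closed convex set $X$ (which is the standing hypothesis in Nagurney's framework invoked by the excerpt), since strong monotonicity by itself only constrains the quadratic growth of $\langle F(x_1)-F(x_2), x_1-x_2\rangle$.

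For uniqueness, I would first suppose that $x_1^*$ and $x_2^*$ both satisfy $\mathrm{VI}(F,X)$. Plugging $x=x_2^*$ into the inequality for $x_1^*$ and $x=x_1^*$ into the inequality for $x_2^*$ yields
\begin{align*}
(x_2^*-x_1^*)^T F(x_1^*) &\geq 0,\\
(x_1^*-x_2^*)^T F(x_2^*) &\geq 0.
\end{align*}
Adding them gives $(x_1^*-x_2^*)^T\bigl(F(x_2^*)-F(x_1^*)\bigr)\geq 0$, i.e.\ $(x_1^*-x_2^*)^T\bigl(F(x_1^*)-F(x_2^*)\bigr)\leq 0$. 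But strong monotonicity with modulus $\mu>0$ forces $(x_1^*-x_2^*)^T\bigl(F(x_1^*)-F(x_2^*)\bigr)\geq \mu\|x_1^*-x_2^*\|^2$, so $\mu\|x_1^*-x_2^*\|^2\leq 0$, which compels $x_1^*=x_2^*$. This part is completely self-contained and uses nothing beyond the definitions in the appendix.

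For existence, my plan is to exploit Proposition \ref{fpppro}: a solution of $\mathrm{VI}(F,X)$ is exactly a fixed point of the map $T_\alpha(x):=P_X(x-\alpha F(x))$. I would first handle the case where $F$ is also Lipschitz on $X$ with constant $\ell$: using the nonexpansiveness of $P_X$, one expands
\begin{equation*}
\|T_\alpha(x)-T_\alpha(y)\|^2\leq \|x-y\|^2 - 2\alpha(x-y)^T(F(x)-F(y)) + \alpha^2\|F(x)-F(y)\|^2,
\end{equation*}
and strong monotonicity plus the Lipschitz bound give the factor $(1-2\alpha\mu+\alpha^2\ell^2)$. Choosing $\alpha\in(0,2\mu/\ell^2)$ makes $T_\alpha$ a strict contraction on the closed convex (hence complete) set $X$, and Banach's fixed-point theorem delivers a fixed point, i.e.\ a solution $x^*$. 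To cover the case where only continuity (not Lipschitz) is assumed, I would fall back on a Brouwer-type argument: strong monotonicity implies coercivity, since fixing any $x_0\in X$,
\begin{equation*}
\mu\|x-x_0\|^2 \leq (x-x_0)^T(F(x)-F(x_0)) \leq \|x-x_0\|\,\|F(x)\| + \|x-x_0\|\,\|F(x_0)\|,
\end{equation*}
so $\|F(x)\|\to\infty$ as $\|x\|\to\infty$. Then one restricts to the compact convex set $X_R=X\cap\overline{B(x_0,R)}$, applies Brouwer's fixed-point theorem to the continuous self-map $P_{X_R}(x-\alpha F(x))$ to get $x_R^*$, and uses the coercivity bound to show $x_R^*$ lies strictly inside the ball for large $R$, so $x_R^*$ is in fact a fixed point of $P_X(x-\alpha F(\cdot))$ and hence a solution of $\mathrm{VI}(F,X)$.

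The main obstacle I expect is the existence step in the absence of any Lipschitz hypothesis: the contraction argument is clean only when Lipschitzness is available, and otherwise one must combine Brouwer with a careful coercivity-based localization to rule out boundary fixed points of the truncated problem. Uniqueness is immediate; the genuine work is in marrying continuity of $F$ with the strong-monotonicity-driven coercivity to force a fixed point inside $X$, after which uniqueness from the previous step upgrades the conclusion to ``precisely one solution.''
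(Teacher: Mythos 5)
Your proof is correct in substance, but there is no in-paper argument to compare it against: the paper states this theorem in its appendix and cites it verbatim from \cite{nagurney2012projected} without proof, using it only as imported background for the projected dynamics \eqref{frz}. Judged on its own merits, your uniqueness step (test each solution's variational inequality at the other, add, and contradict strong monotonicity) is airtight and self-contained. For existence, both of your routes are classical and sound: the Banach contraction argument for $T_\alpha = P_X(\cdot-\alpha F(\cdot))$ with $\alpha\in(0,2\mu/\ell^2)$ is the standard proof when $F$ is Lipschitz (and is the same $1-2\alpha\mu+\alpha^2\ell^2$ estimate underlying the rate the paper borrows from \cite{gao2003exponential} in Theorem \ref{thm3.5}), while the Brouwer-plus-localization argument is essentially the textbook proof in the cited reference. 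One imprecision worth fixing in the second route: the coercivity you derive, $\|F(x)\|\to\infty$, is not the form that does the work --- a field can blow up in norm while pointing ``sideways,'' so norm growth alone rules out nothing. What the localization actually needs, and what strong monotonicity gives directly, is the inner-product bound $(x-x_0)^T F(x)\geq \mu\|x-x_0\|^2-\|x-x_0\|\,\|F(x_0)\|$; testing the truncated problem $\mathrm{VI}(F,X_R)$ at $x_0$ gives $(x_R^*-x_0)^T F(x_R^*)\leq 0$ and hence the uniform bound $\|x_R^*-x_0\|\leq \|F(x_0)\|/\mu$, so for $R>\|F(x_0)\|/\mu$ the point $x_R^*$ is interior to the ball, and the usual small-convex-combination step ($x_R^*+t(y-x_R^*)\in X_R$ for small $t>0$, any $y\in X$) upgrades it to a solution of $\mathrm{VI}(F,X)$. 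Your explicit flagging of continuity as a needed standing hypothesis is also right --- strong monotonicity alone does not imply it, though in this paper the maps $G$ and $G_r$ are continuously differentiable, so nothing is lost in context.
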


Consider the following globally projected dynamical system proposed in \cite{friesz1994day}:
\begin{equation}
\dot{x}=\beta\{P_{X}[x-\alpha F(x)]-x\}\label{frz}
\end{equation}
where $\beta,\alpha$ are positive constants and $P_{X}:\mathbb{R}^{n}\rightarrow X$ is a projection operator as defined in \eqref{projection}.

\begin{remark}(\cite{gao2003exponential})\label{remark303}\\
    $x^*$ is an equilibrium point of \eqref{frz} if and only if $x^*$ is a solution of the variational inequality problem \eqref{vip}.
\end{remark}
From Remark \ref{remark303}, 
\begin{equation*}
\dot{x}=0\implies x^* = P_X(x^*-\alpha F(x^*)).
\end{equation*}
%%%% monotonicity of F
\begin{definition}
{(Monotone Map,\cite{karamardian1990seven})}\label{def1.1}\\
    A mapping $F$ is monotone on $X \subseteq \mathbb{R}^n$, if for every pair of distinct points $x,y\in X$, we have 
    \begin{equation*}
    (y-x)^T(F(y)-F(x)) \geq 0.
    \end{equation*}  
\end{definition}

%%%% strong monotonicity of F
\begin{definition}{(Strongly Monotone Map,\cite{karamardian1990seven})}\label{def1.3}\\
    A mapping $F$ is strongly monotone on $X \subseteq \mathbb{R}^n$, if there exists $\mu>0$ such that, for every pair of distinct points $x,y\in X$, we have 
    \begin{equation*}
    (y-x)^T(F(y)-F(x)) \geq \mu \norm{x-y}^2.
    \end{equation*}  
\end{definition}

%%%% Jacobian positive definiteness
The relation between monotonicity of $F$ and positive definiteness of its Jacobian matrix
\begin{align*}
\nabla F(x)=\Bigg (\frac{\partial F_i(x)}{\partial x_j}\Bigg)_{i,j=1,2,\ldots,n},
\end{align*}
as given below.

\begin{proposition}((Strongly) Positive Definite Jacobian of $F(x)$ implies (Strongly) Monotone $F(x)$,\cite{nagurney2012projected})\label{monoto}\\
    Suppose that $F$ is continuously differentiable on $X$.
    \begin{enumerate}
        \item If the Jacobian matrix $\nabla F(x)$ is positive semidefinite, i.e., 
        \begin{align*}
        y^T \nabla F(x) y \geq 0, \forall y \in \mathbb{R}^n, x \in X,
        \end{align*}
        then $F$ is monotone on $X$.
        \item If the Jacobian matrix $\nabla F(x)$ is positive definite, i.e., 
        \begin{align*}
        y^T \nabla F(x) y > 0, \forall y \in \mathbb{R}^n, x \in X,
        \end{align*}
        then $F$ is strictly monotone on $X$.
        \item If $\nabla F(x)$ is strongly positive definite, i.e., 
        \begin{align*}
        y^T\nabla F(x) y \geq \mu \norm{z}^2,\forall y \in \mathbb{R}^n, x \in X,
        \end{align*}
        then $F(x)$ is strongly monotone on $X$.
    \end{enumerate}    
\end{proposition}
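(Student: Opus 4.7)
The plan is to prove all three implications simultaneously by exploiting a single identity obtained from the fundamental theorem of calculus along the segment joining the two points. Throughout, I rely on the standing assumption that $X$ is convex (which is implicit in the variational-inequality setup of Definition \ref{vi_Def}) so that the segment from $x$ to $y$ stays in $X$ and on the $C^1$ regularity of $F$ stipulated in the hypothesis.

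First, for arbitrary distinct $x,y \in X$, I would parametrize the segment by $\gamma(t) = x + t(y-x)$, $t \in [0,1]$, and apply the chain rule to the composition $F \circ \gamma$, yielding
\begin{equation*}
F(y) - F(x) \;=\; \int_0^1 \frac{d}{dt} F(\gamma(t))\, dt \;=\; \int_0^1 \nabla F(\gamma(t))\,(y-x)\, dt.
\end{equation*}
Taking the inner product on both sides with $y-x$ gives the master identity
\begin{equation*}
(y-x)^T\bigl(F(y) - F(x)\bigr) \;=\; \int_0^1 (y-x)^T\, \nabla F(\gamma(t))\,(y-x)\, dt.
\end{equation*}

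Next, the three conclusions are read off directly by substituting the three hypotheses into the integrand. For case (1), positive semidefiniteness of $\nabla F$ on $X$ makes the integrand pointwise nonnegative, so the integral is nonnegative, which is exactly the monotonicity inequality of Definition \ref{def1.1}. For case (2), since $y \neq x$ implies $y-x \neq 0$, positive definiteness makes the integrand strictly positive for every $t \in [0,1]$; the integrand is also continuous in $t$ (because $\nabla F$ is continuous and $\gamma$ is smooth), so the integral is strictly positive, yielding strict monotonicity. For case (3), the strong positive definiteness hypothesis lower-bounds the integrand by $\mu\|y-x\|^2$, which is independent of $t$, and integrating over $[0,1]$ immediately gives $(y-x)^T(F(y)-F(x)) \geq \mu\|y-x\|^2$, matching Definition \ref{def1.3}.

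There is no serious obstacle in this argument; the only point that deserves care is justifying that the line segment lies in $X$ so that $\nabla F(\gamma(t))$ is defined and satisfies the hypothesis for every $t$, which is handled by convexity of $X$. A minor cosmetic point in case (2) is that the strict inequality requires invoking continuity of the integrand rather than just pointwise positivity; this is routine given that $F \in C^1$.
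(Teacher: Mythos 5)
Your proof is correct. Note that the paper itself does not prove this proposition: it is stated in the Appendix as a quoted result with a citation to \cite{nagurney2012projected}, so there is no in-paper argument to compare against. Your line-integral argument --- writing $F(y)-F(x)=\int_0^1 \nabla F(x+t(y-x))(y-x)\,dt$ and pairing with $y-x$ --- is precisely the canonical proof found in the cited literature, and you handle the two points that actually need care: convexity of $X$ (needed so the segment stays in $X$; this is consistent with the paper, where $X$ is closed and convex by Definition \ref{vi_Def} and is a polyhedron under Assumption \ref{ass2}) and continuity of the integrand in case (2), where pointwise strict positivity alone would not suffice for a Riemann-integral argument. You also silently read the statement's ``$\mu\norm{z}^2$'' in item (3) as the intended $\mu\norm{y}^2$ (applied with $y-x$ in place of $y$), which is the correct interpretation of what is evidently a typo in the proposition as printed.
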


\subsubsection{Splitting of the tangent vector}
Consider the tangent vector $(\dot{x},\dot{\lambda})\in \mathbb{R}^{n+m}$, with the Euclidean metric $\mathbb{I}$, then 

$v_H=(\dot{x},0)$ and $v_V=(0, \dot{\lambda})$ such that
\begin{align}
    \langle v_H,v_V \rangle_I=\begin{bmatrix}
    \dot{x} & \mathrm{0}
    \end{bmatrix}\begin{bmatrix}I & \mathrm{0}\\ \mathrm{0} & I\end{bmatrix}\begin{bmatrix}
    \mathrm{0}\\\dot{\lambda}
    \end{bmatrix}
\end{align}
i.e., $v_H\perp v_V$ and $(\dot{x},\dot{\lambda}) = v_H \oplus v_V=0$.

\begin{remark}\label{remark33} Given a tangent vector $v = (\dot{x},\dot{\lambda}) \in T_\mathcal{M}$, orthogonality of $v_V$ and $v_H$ is preserved under the new metric $R$.
\end{remark}
\begin{proof}
From \eqref{psedoR}, we observe that the splitting of the vectors is only dependent on $m_{12}$ or $m_{21}$, and $m_{22}$ in any case.
Let us rewrite $R$ as 
    $R = \begin{bmatrix}
    m_{11} & m_{12} \\ m_{21} & m_{22}
    \end{bmatrix}$.
Now, the relevant splitting of the tangent vectors $(\dot{x}, \dot{\lambda})$ confirming the structure of $R$ defined in \eqref{rm1} is as follows:
\begin{align}
    (\dot{x},\dot{\lambda}) &= v_H\oplus v_V\\
    &=\Big( \dot{x}, -{m^{-1}_{22}}{m_{21}}\dot{x}\Big)\oplus\Big( 0, \dot{\lambda}+{m^{-1}_{22}}{m_{21}}\dot{x}\Big)\\
    &=\begin{bmatrix}\dot{x} & -{m^{-1}_{22}}{m_{21}}\dot{x}\end{bmatrix}\begin{bmatrix}m_{21}\Big(\dot{\lambda}+{m^{-1}_{22}}{m_{21}}\dot{x}\Big)\\
    m_{22}\Big(\dot{\lambda}+{m^{-1}_{22}}{m_{21}}\dot{x}\Big)
    \end{bmatrix}\\
    &=m_{21}\dot{x}\Big(\dot{\lambda} + {m^{-1}_{22}}{m_{21}}\dot{x}\Big) - m_{21}\dot{x}\Big(\dot{\lambda}+{m^{-1}_{22}}{m_{21}}\dot{x}\Big) \label{the64}\\
    &= 0 
\end{align}
\end{proof}
\begin{remark}
As the value of parameter $k$ increases the vertical component in $v_H$, i.e., $-{m^{-1}_{22}}{m_{21}}\dot{x}$ decreases in value, which causes the trajectories off-manifold $\mathcal{M}$ to approach $\mathcal{M}$ faster.
\end{remark}
\begin{lemma}\label{lemma1.3}{\cite{xia2000stability}}
    Assume that $F$ is locally Lipschitz continuous in a domain $D$ that contains $X$. Then the solution $x(t)$ of \eqref{frz} will approach exponentially the feasible set $X$ when the initial point $x^0\notin X$. Moreover, if $x^0\in X$, then $x(t) \in X$.
\end{lemma}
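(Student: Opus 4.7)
The plan is to use the squared distance to $X$ as a Lyapunov function and exploit the characteristic inequality for projections onto closed convex sets. Define
\begin{equation*}
V(x) = \tfrac{1}{2}\,d(x,X)^2 = \tfrac{1}{2}\,\|x - P_X(x)\|^2.
\end{equation*}
Since $X$ is closed and convex, the projection $P_X$ is single-valued and $1$-Lipschitz, and a standard result gives that $V$ is continuously differentiable on $\mathbb{R}^n$ with $\nabla V(x) = x - P_X(x)$. Local Lipschitz continuity of $F$, together with the non-expansiveness of $P_X$, makes the right-hand side of \eqref{frz} locally Lipschitz, so a unique local solution $x(t)$ exists for every initial condition; hence $V(x(t))$ is absolutely continuous and can be differentiated along trajectories.

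Next I would compute $\dot V$ along \eqref{frz}. Writing the vector field as $(P_X[x-\alpha F(x)] - P_X(x)) + (P_X(x) - x)$ and using $\nabla V(x) = x - P_X(x)$ yields
\begin{equation*}
\dot V(x) = \beta\,(x - P_X(x))^T\bigl(P_X[x-\alpha F(x)] - P_X(x)\bigr) \;-\; \beta\,\|x - P_X(x)\|^2.
\end{equation*}
The key step is to bound the first term. Applying the defining variational inequality of the projection, namely $(y - P_X(x))^T(x - P_X(x)) \le 0$ for every $y \in X$, with the choice $y = P_X[x-\alpha F(x)] \in X$, gives
\begin{equation*}
(x - P_X(x))^T\bigl(P_X[x-\alpha F(x)] - P_X(x)\bigr) \le 0.
\end{equation*}
Therefore $\dot V(x) \le -\beta\,\|x - P_X(x)\|^2 = -2\beta\,V(x)$, and Gr\"onwall's inequality gives
\begin{equation*}
d(x(t),X) \le d(x^0,X)\,e^{-\beta t}, \qquad t \ge 0,
\end{equation*}
which is the claimed exponential approach. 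This estimate also rules out finite-time blow-up of $d(x(t),X)$, so combined with local existence the solution can be extended for all $t \ge 0$.

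For the invariance claim, the same differential inequality $\dot V \le -2\beta V$ with $V(x^0)=0$ (i.e.\ $x^0 \in X$) forces $V(x(t)) = 0$ for all $t \ge 0$, hence $x(t) = P_X(x(t)) \in X$. The main obstacle I anticipate is the regularity at the boundary of $X$: one must justify that $V$ is $C^1$ across $\partial X$ (true because $X$ is closed convex, so the distance function is differentiable with the stated gradient that vanishes on $X$), and that the chain rule applies along the trajectory despite $P_X$ only being Lipschitz, not $C^1$. This is handled by the explicit computation above, which never differentiates $P_X$ and only uses its variational characterization. No additional assumptions beyond closedness/convexity of $X$ and local Lipschitzness of $F$ are needed.
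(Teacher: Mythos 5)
Your proof is correct and follows essentially the same route as the source the paper cites for this lemma, \cite{xia2000stability} (the paper itself states the result without proof): take $V(x)=\tfrac{1}{2}\norm{x-P_X(x)}^2$, whose gradient is $x-P_X(x)$, split the vector field of \eqref{frz} as $\bigl(P_X[x-\alpha F(x)]-P_X(x)\bigr)+\bigl(P_X(x)-x\bigr)$, and eliminate the first term via the variational characterization of the projection with $y=P_X[x-\alpha F(x)]\in X$, yielding $\dot V\leq -2\beta V$, hence the rate $e^{-\beta t}$ and the invariance of $X$ when $V(x^0)=0$. One small overreach worth noting: your claim that the bound on $d(x(t),X)$ rules out finite-time blow-up and extends the solution to all $t\geq 0$ does not follow when $X$ is unbounded (as with $\{x: Ax\leq b\}$), since bounded distance to $X$ does not bound $\norm{x(t)}$; however, the lemma only asserts the exponential approach and invariance along the solution on its interval of existence, so this side remark does not affect the validity of the proof of the stated claims.
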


\end{document}